\documentclass[12pt]{article}
\usepackage{amsmath,amsfonts,amsthm,amscd,amssymb,latexsym,comment}
\usepackage{enumerate,graphicx,psfrag,subfigure,changebar,oldgerm}
\usepackage[mathscr]{eucal}
\usepackage[usenames]{color}
\sloppy

%
\title{Automorphisms of Partially Commutative Groups I: Linear Subgroups
\footnote{Research carried out while the second and third named
authors were visiting Newcastle University with the support of
EPSRC  grant
EP/D065275/1, June 2005.}}
\author{ \textsf{Andrew J. Duncan}
 \and \textsf{Ilya V. Kazachkov}
 \and \textsf{Vladimir N. Remeslennikov}}


\def\nul{\emptyset }
\def\D{\Delta }

\def\e{\varepsilon }
\def\G{\Gamma }

\def\a{\alpha }
\def\t{\tau }


%

\newtheorem{thm}{Theorem}[section]
\newtheorem{lem}[thm]{Lemma}
\newtheorem{cor}[thm]{Corollary}
\newtheorem{prop}[thm]{Proposition}

\newtheorem{defn}[thm]{Definition}
\newtheorem*{defn*}{Definition}
\newtheorem{exam}[thm]{Example}

%

\numberwithin{equation}{section}
\numberwithin{figure}{section}

\newcommand{\Aut}{\operatorname{Aut}}
\newcommand{\ZZ}{\ensuremath{\mathbb{Z}}}
\newcommand{\QQ}{\ensuremath{\mathbb{Q}}}

\newcommand{\CC}{\ensuremath{\mathbb{C}}}



\newcommand{\az}{\mathop{{\ensuremath{\alpha}}}}
\newcommand{\la}{\langle}
\newcommand{\ra}{\rangle}
\newcommand{\maxx}{\texttt{max}}
\newcommand{\minn}{\texttt{min}}

\newcommand{\cl}{\operatorname{cl}}

\newcommand{\maps}{\rightarrow}

\newcommand{\bs}{\backslash}
\newcommand{\St}{\operatorname{St}}
\newcommand{\conj}{\operatorname{conj}}
\newcommand{\cSt}{\St^{\conj}}
\newcommand{\LInn}{\operatorname{Conj}}
\newcommand{\Inn}{\operatorname{Inn}}

\newcommand{\Tr}{\operatorname{Tr}}
\newcommand{\Trp}{\operatorname{Tr}_\perp}
\newcommand{\tr}{\operatorname{tr}}

\newcommand{\GL}{\operatorname{GL}}

\newcommand{\be}{\begin{enumerate}}
\newcommand{\ee}{\end{enumerate}}


\newlength{\nts}
\setlength{\nts}{.5ex}
\newlength{\rts}
\setlength{\rts}{.5cm}
\newlength{\lts}
\setlength{\lts}{1.75cm}
\begin{document}

\maketitle

\begin{abstract}
To the Memory of Wilhelm Magnus.
\end{abstract}

 \tableofcontents

\subsection*{Glossary of Notation}
\begin{tabbing}
 $\G$ \hspace*{\lts}\= ---  \hspace*{\rts}\=\parbox{9.5cm}{a finite, simple,
    undirected graph with vertex set $X$}\\[\nts]
  $G$ or $G(\G)$ \> --- \> \parbox[t]{9.5cm}{the (free) partially commutative group
    with underlying graph $\G$}\\[\nts]
  $G(Y)$ \> --- \> \parbox[t]{9.5cm}{the subgroup of $G(\G)$ generated by $Y\subseteq X$} \\[\nts]
  $\lg(w)$ \> --- \> \parbox[t]{9.5cm}{the minimal length of a word $w'$ such that $w =_{G} w'$}\\[\nts]
  $u\circ v$ \> --- \> \parbox[t]{9.5cm}{$\lg(uv)=\lg(u)+\lg(v)$} \\[\nts]
  $\a(w)$ \> --- \> \parbox[t]{9.5cm}{$\{x\in X\mid x^{\pm 1}$ occurs in a word of
    minimal length representing $w\}$ } \\[\nts]
  $d(x,y)$ \> --- \> \parbox[t]{9.5cm}{the distance from $x$ to $y$, $x,y \in \G$}\\[\nts]
  $Y^\perp$ \> --- \> \parbox[t]{9.5cm}{the orthogonal complement of $Y$ in $X$} \\[\nts]
  $x\sim y$ \> --- \> \parbox[t]{9.5cm}{elements $x$ and $y$ of $X$ are equivalent:
    that is $x^{\perp}=y^{\perp}$}\\[\nts]
  $[x]$ \> --- \> \parbox[t]{9.5cm}{the equivalence class
    of $x$ under $\sim$}\\[\nts]
  $N_1$ \> --- \> \parbox[t]{9.5cm}{$\{x\in X\mid |[x]|=1 \}$}\\[\nts]
  $N_2$ \> --- \> \parbox[t]{9.5cm}{$\{x\in X\mid |[x]|\ge 2\}$ }\\[\nts]
  $X^\prime$ \> --- \> \parbox[t]{9.5cm}{the  quotient $X/\sim$}\\[\nts]
  $N_2^\prime $ \> --- \> \parbox[t]{9.5cm}{$\{[x]\in X^\prime \mid x\in N_2 \}$}\\[\nts]
  $\cl(Y)$ \> --- \> \parbox[t]{9.5cm}{the closure of $Y$ in $X$, i.e. $\cl(Y)=Y^{\perp \perp}$}\\[\nts]
  $L(\G)$ or $L$ \> --- \> \parbox[t]{9.5cm}{the lattice of closed sets of $\G$}\\[\nts]
  $x<_L y $ \> --- \> \parbox[t]{9.5cm}{$\cl(x)\subseteq \cl(y)$, $x,y\in X$}\\[\nts]
  $L_X$ \> --- \> \parbox[t]{9.5cm}{$\{Y\in L\mid Y=\cl(x)$ for some $x\in X\}$  }\\[\nts]
  $L^{\maxx} $ \> --- \> \parbox[t]{9.5cm}{the $<_L$-maximal elements of $L_X$}\\[\nts]
  $\prec$ \> --- \> \parbox[t]{9.5cm}{a total order on $X$}\\[\nts]
  $X^{\minn}$ \> --- \> \parbox[t]{9.5cm}{$\{x\in X\mid x$ is a $\prec$-minimal element of $[x]\}$ }\\[\nts]
  $Y^\minn$ \> --- \> \parbox[t]{9.5cm}{$X^\minn\cap Y$, where $Y\subseteq X$}\\[\nts]
  $\LInn(G) $ \> --- \> \parbox[t]{9.5cm}{the set of conjugating automorphisms of $G$}\\[\nts]
  $\St(L)$ \> --- \> \parbox[t]{9.5cm}{the stabiliser of all $G(Y)$ where $Y\in L$, i.e. automorphisms
    fixing all parabolic centralisers}\\[\nts]
  $\St(L_X)$ \> --- \> \parbox[t]{9.5cm}{$\{\phi\in \Aut(G):  G(Y)^\phi=G(Y) \textrm{ for all } Y\in L_X\}$}\\[\nts]
  $\cSt(L)$ \> --- \> \parbox[t]{9.5cm}{the conjugate-stabiliser of all $L$, i.e.
    automorphisms
    fixing all parabolic centralisers up to conjugation}\\[\nts]
  $\St(L^\maxx)$ \> --- \> \parbox[t]{9.5cm}{the stabiliser all subgroups $G(Y)$, for  $Y\in L^\maxx$}\\[\nts]
  $[\phi] $ \> --- \> \parbox[t]{9.5cm}{an integer valued matrix corresponding
    to an automorphism of $G$ } \\[\nts]
  $S_Y, U_Y, D_Y$ \> --- \> \parbox[t]{9.5cm}{sets of integer valued matrices corresponding to
    $Y\in L$ } \\[\nts]
  $L(Y) $ \> --- \> \parbox[t]{9.5cm}{elements of $L$ contained in $Y\in L$} \\[\nts]
 $\phi_Y $ \> --- \> \parbox[t]{9.5cm}{the restriction of $\phi\in \Aut(G)$ to $G(Y)$} \\[\nts]
  $\St_Y(L) $ \> --- \> \parbox[t]{9.5cm}{$\{\phi_Y\mid \phi\in \St(L)\}$, where $Y\subseteq X\}$} \\[\nts]
  $C_G(S)$ \> --- \> \parbox[t]{9.5cm}{the centraliser of a  subset $S$ of $G$}
\end{tabbing}

\section*{Introduction}
Recently a wave of interest in groups of automorphisms of partially commutative groups has
risen; see \cite{VC1, VCC, VC2, GPR}. This emergence of interest may be  attributable  to the fact
that numerous geometric and arithmetic groups are subgroups of these groups.

The goal of this paper is to construct and describe certain arithmetic subgroups
of the automorphism group of a partially commutative group. More precisely,
given an arbitrary finite graph $\Gamma$ we construct an arithmetic
subgroup $\St(L(\G))$ (see Section \ref{sec:stab} for definitions), represented  as a
subgroup of $GL(n,\ZZ)$, where $n$ is the number of vertices of the graph $\Gamma$,
see Theorems \ref{thm:StLgen} and \ref{thm:key}. Note, that our proof is independent of the results of Laurence and Serviatius,
\cite{Laurence, Servatius}, which give a description of the generating set of the
automorphism group $\Aut(G(\G))$. One of the advantages of this proof is that it is largely
combinatorial, rather than group-theoretic, so could be adjusted to obtain analogous
results for partially commutative algebras determined by the graph $\Gamma$ (in various varieties of algebras).

In the last section of the paper we give a description of the decomposition
of the group $\cSt(L(\G))$
(see Section \ref{sec:stab} for definitions) as a semidirect
product of the group of conjugating automorphisms $\LInn(G)$
and $\St(L(\G))$. This  result is closely related to Theorem 1.4 of
\cite{GPR}, but the situations considered in \cite{GPR} and in this paper are somewhat different.

The authors are grateful to the Centre de Recerca Matem\`atica (CRM): these results were
announced at a workshop, supported  by the CRM,  in Manresa,
and the proof outlined in seminars, while the authors were on research visits
to the CRM in September 2006.

\section{Preliminaries}\label{sec:prelim}
\subsection{Graphs and Lattices of Closed Subsets }\label{sec:graph}
In this section we give definitions and a summary of the facts we need
concerning graphs, orthogonal systems and closed subsets of a graph. For further
details the reader is referred to \cite{DKR3}.
Graph will mean undirected, finite, simple graph throughout this paper. If $x$ and $y$ are vertices of a  graph then we define the {\em distance} $d(x,y)$ from $x$ to $y$ to be the minimum of the lengths of all paths from $x$ to $y$ in $\G$. A subgraph $S$ of a graph $\G$ is called a {\em full} subgraph if vertices $a$ and $b$ of $S$ are joined by an edge of $S$ whenever they are joined by an edge of $\G$.

Let $\G$ be a graph with $V(\G)=X$. A subset $Y$ of $X$ is called a {\em simplex} if the full subgraph of $\G$ with vertices $Y$ is isomorphic to  a
complete graph. Given a subset
$Y$ of $X$  the {\em orthogonal complement}
of $Y$ is defined to be
\[Y^\perp=\{u\in X|d(u,y)\le 1, \textrm{ for all } y\in Y\}.\]
By convention we set $\nul^\perp=X$. It is not hard to see that
$Y\subseteq Y^{\perp\perp}$ and $Y^\perp= Y^{\perp\perp\perp}$ \cite[Lemma 2.1]{DKR3}.
We define the {\em closure} of $Y$ to be
$\cl(Y)=Y^{\perp\perp}$.
The closure operator in $\G$ satisfies, among others, the properties that
$Y\subseteq \cl(Y)$,
$\cl(Y^\perp)=Y^\perp$ and
$\cl(\cl(Y))=\cl(Y)$ \cite[Lemma 2.4]{DKR3}.  Moreover if $Y_1\subseteq Y_2\subseteq X$
then $\cl(Y_1)\subseteq \cl(Y_2)$.
\begin{defn}
A subset $Y$ of $X$ is called {\em closed} {\rm(}with respect to $\G${\rm)}
if $Y=\cl(Y)$.
Denote by $L=L(\G)$ the set of all closed subsets of $X$.
\end{defn}

Then $\cl(Y)\in L$, for all $Y\subseteq X$ and $U\in L$ if
and only if $U=V^\perp$, for some $V\subseteq X$ \cite[Lemma 2.7]{DKR3}. The
relation $Y_1\subseteq Y_2$ defines a partial order on the set
$L$. As the closure operator $\cl$ is inclusion preserving and maps
arbitrary subsets of $X$ into closed sets,
$L$ is a lattice where
 the infimum  $Y_1\wedge Y_2$ of $Y_1$ and $Y_2$
is  $Y_1\wedge Y_2=\cl(Y_1\cap Y_2)=Y_1\cap Y_2$
and the supremum is
$Y_1 \vee Y_2=\cl(Y_1\cup Y_2)$.

\subsection{Partially Commutative Groups}
Let $\G$ be a finite, undirected, simple graph. Let $X=V(\G)$ be the set of vertices of $\G$ and let $F(X)$ be the free group on $X$. For elements $g,h$ of a group we denote the commutator $g^{-1}h^{-1}gh$ of $g$ and $h$ by $[g,h]$. Let
\[
R=\{[x_i,x_j]\in F(X)\mid x_i,x_j\in X \textrm{ and there is an edge from } x_i \textrm{ to } x_j
\textrm{ in } \G\}.
\]
We define the {\em partially commutative group with (commutation) graph } $\G$ to be the group $G(\G)$ with presentation $\left< X\mid R\right>$.
(Note that these are the groups which are called finitely generated free partially commutative groups in \cite{DK}.)

The subgroup generated by a subset $Y\subseteq X$ is called an {\em canonical parabolic subgroup} of $G$
and denoted  $G(Y)$. This subgroup is equal to the partially commutative group
with commutation graph the full subgraph of $\G$ generated by $Y$ \cite{B1}.
The connection between closed sets and the group $G(\G)$ is established by
Proposition 3.9 of \cite{DKR4}: a subgroup $G(Y)^g$ of $G$ is a centraliser if and only if $Y\in L(\G)$.
If $Y$ is a closed subset of $\G$ and $g\in G$ then the  subgroup $G(Y)^g=C_G(Y^\perp)^g$
is called an {\em parabolic centraliser}.

Let $\G$ be a simple graph, $G=G(\G)$ and let $w\in G$.
Denote by $\lg(w)$ the minimum of the lengths of words that represents the element $w$.
We say that   $w \in G$
 is {\em cyclically minimal} if and only if
$$
\lg(g^{-1}wg) \ge \lg(w)
$$
We write $u\circ v$ to express the fact that $\lg(uv)=l(u)+l(v)$. We say that
$u$ is a {\em left divisor} ({\em right divisor})  of $w$ if there exists $v$
such that $w=u\circ v$ ($w=v\circ u$). If $g\in G$ and $w$ is a word of minimal
length representing $w$ then we write $\a(g)$ for the set of elements $x\in X$
such that $x^\pm 1$ occurs in $w$. In \cite{EKR} it is shown that $\a(g)$ depends
only on $g$ and not on the choice of $w$.

The {\em non-commutation} graph of
the partially commutative  group $G(\G)$ is the graph $\Delta$,
dual to $\Gamma$, with vertex set $V(\Delta)=X$ and
an edge connecting $x_i$ and $x_j$ if and only if $\left[x_i, x_j
\right] \ne 1$. The graph $\D$ is a union of its connected
components $\D_1, \ldots , \D_k$ and  words that depend on
letters from distinct components commute. For any graph $\G$,
if $S$ is a subset
of $V(\G)$ we shall write $\G(S)$ for the full subgraph of $\G$ with
vertices $S$. Now, if
the vertex set of
$\D_k$ is $I_k$ and $\G_k=\G(I_k)$
then
$G=
G(\G_1) \times \cdots \times G(\G_k)$.
For $g \in G$ let $\az(g)$ be the set of elements $x$ of $X$ such
that $x^{\pm 1}$ occurs in a minimal word $w$ representing $g$.
It is shown in \cite{EKR} that $\a(g)$ is well-defined.
Now suppose that  the full subgraph $\Delta (\az(w))$
of $\Delta$ with vertices $\a(w)$ has connected components
$\D_1,\ldots, \D_l$ and let the vertex set of $\D_j$ be $I_j$.
Then, since $[I_j,I_k]=1$,
we can split $w$ into the product of commuting words,
$w=w_1\circ \cdots \circ w_l$, where $w_j\in G(\G(I_j))$, so $[w_j,w_k]=1$
for all $j,k$.
If $w$ is cyclically minimal then
we call this expression for $w$ a
 {\em block decomposition} of $w$ and
say $w_j$ is a {\em block} of $w$, for $j=1,\ldots ,l$. Thus $w$ itself is
 a block if and only if  $\D(\a(w))$ is connected.

As in \cite{DKR4} we make the following definition.
\begin{defn}
Let
$w$ be a
cyclically minimal root element of $G$ with block decomposition
$w=w_1 \cdots w_k$ and let $Z$ be a subset of $X$ such that
$Z\subseteq \a(w)^\perp$. Then the
subgroup $Q=Q(w,Z)=\langle w_1 \rangle \times \dots \times \langle
w_k\rangle \times G(Z)$ is called a
{\em canonical quasiparabolic subgroup} of $G$.
\end{defn}
A subgroup is called {\em quasiparabolic} if it is conjugate to a canonical quasiparabolic subgroup.
In \cite{DKR4} centralisers of arbitrary subsets of a partially commutative group are
characterised in terms of quasiparabolic subgroups and we shall use this result in Section \ref{subsec:L-order}.

\subsection{Conjugating Automorphisms}\label{sec:conj}
Automorphisms which act locally by conjugation play an important role in the structure of $\Aut(G)$.

For $S\subseteq X$ define $\G_S$ to be $\G\bs S$, the graph obtained from $\G$ by removing all vertices of $S$ and all their incident edges.
\begin{defn}\label{defn:elia}
Let $x\in X$ and let $C$ be a connected component of $\G_{x^\perp}$. The automorphism $\a_C(x)$ given by
\[
y\mapsto
\left\{
\begin{array}{ll}
y^x, &\textrm{ if } y\in C\\
y, &\textrm{ otherwise}
\end{array}
\right.
\]
is called an {\em elementary conjugating automorphism} of $\G$. The subgroup of $\Aut(G)$ generated by all elementary conjugating automorphisms (over all connected components of $\G_{x^\perp}$ and all $x\in X$) is called the group of {\em conjugating automorphisms} and is denoted $\LInn(G)$.
\end{defn}

\begin{thm}[M. Laurence \cite{Laurence}]\label{thm:laurence}
An $\psi\in \Aut(G)$ is a conjugating automorphism if there exists $g_x\in G$ such that $x^\phi=x^{g_x}$, for all $x\in X$.
\end{thm}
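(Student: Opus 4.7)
The easy direction --- that every element of $\LInn(G)$ sends each generator to a conjugate of itself --- is immediate from Definition \ref{defn:elia} and the observation that this property is preserved under composition: if $x^\phi=x^{g_x}$ and $x^\psi=x^{h_x}$ for all $x\in X$, then $x^{\phi\psi}=(x^{g_x})^\psi=(x^\psi)^{g_x^\psi}=x^{h_x g_x^\psi}$. So I only have to prove the ``if'' direction, i.e.\ that the hypothesis $x^\phi=x^{g_x}$ forces $\phi$ to lie in the subgroup generated by the $\a_C(y)$'s.

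My plan is induction on a complexity measure. Since $x^{g_x}=x^{h_x}$ iff $g_xh_x^{-1}\in C_G(x)=G(x^\perp)$, the element $g_x$ is determined only up to the right coset $C_G(x)g_x$. Choose each $g_x$ of minimal word length in its coset and set
\[
N(\phi)=\sum_{x\in X}\lg(g_x).
\]
If $N(\phi)=0$ then every $g_x$ lies in $G(x^\perp)=C_G(x)$, so $x^\phi=x$ for all $x\in X$ and $\phi=\id\in\LInn(G)$, which is the base case.

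For the inductive step, assume $N(\phi)>0$ and pick some $x\in X$ with $\lg(g_x)>0$. Write a geodesic $g_x=y\circ w$ with $y\in X\cup X^{-1}$. The minimality of $\lg(g_x)$ in its coset forces $y\notin x^\perp$: otherwise $y\in C_G(x)$ and we could shorten $g_x$ to $w$ by left-multiplying by $y^{-1}$. In particular $x\ne y^{\pm 1}$ and $x$ lies in $\G_{y^\perp}$; let $C$ be the connected component of $\G_{y^\perp}$ containing $x$. Then $\a_C(y)\in\LInn(G)$ is defined, and I would consider the composite $\phi'=\phi\cdot\a_C(y)^{-1}$ and verify two things:
(i) the new conjugator for $x$ is essentially $w$ (possibly conjugated by $y^{-1}$ on its tail), which is shorter;
(ii) for each $z\ne x$ the new conjugator for $z$ has no larger minimal coset length than $g_z$. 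Granting both, $N(\phi')<N(\phi)$; by induction $\phi'\in\LInn(G)$, hence $\phi\in\LInn(G)$.

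The main obstacle is (ii), the bookkeeping that no other $g_z$ gets longer modulo its centralizer. The key structural facts I would exploit are: if $z\notin C\cup\{y\}$, then either $z\in y^\perp$ (so $\a_C(y)^{-1}$ fixes $z$ and conjugation by $y$ acts trivially inside $G(y^\perp)$ on the $C_G(z)$-coset of $g_z$), or $z$ lies in another component $C'$ of $\G_{y^\perp}$; in the latter case a careful analysis of how geodesics in $G$ decompose with respect to the separation produced by $y^\perp$ shows that the insertion of $y^{\pm 1}$ at the start of $g_z^{\a_C(y)^{-1}}$ can be absorbed into $C_G(z)$ or cancelled against an existing leading $y$. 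When $z\in C$ one argues directly using the form $y\circ w$ together with the fact that $y\notin z^\perp$ (which is forced since $z$ lies in the same component of $\G_{y^\perp}$ as $x$) to see that the minimal coset length is unchanged or reduced after absorbing the new leading $y$. This delicate case-analysis, organized around the connected-component structure of $\G_{y^\perp}$, is where essentially all the content of the theorem lives.
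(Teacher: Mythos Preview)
The paper does not prove this theorem; it is quoted from Laurence \cite{Laurence} and used as a black box. So there is nothing in the paper to compare your argument against---your plan is an attempt to reproduce Laurence's original proof.

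Your overall shape (induction on $N(\phi)=\sum_x \lg(g_x)$, peeling off an elementary $\alpha_C(y)$) matches Laurence's strategy, but step (ii) is where essentially all of the work lies, and your sketch does not control it. The new conjugator for $z$ after composing with $\alpha_C(y)^{-1}$ is not obtained by conjugating $g_z$ by $y^{\pm 1}$; it is $g_z^{\,\alpha_C(y)^{-1}}$ (with an extra left factor $y^{-1}$ when $z\in C$), and applying the automorphism $\alpha_C(y)^{-1}$ to $g_z$ replaces every occurrence of a letter of $C$ in $g_z$ by a word of length three. So even when $z\in y^\perp$ the new conjugator can be strictly longer in its $C_G(z)$-coset, contrary to your parenthetical remark; and even for $x$ itself the new conjugator is $w^{\,\alpha_C(y)^{-1}}$ rather than $w$, so claim (i) is not immediate either. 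Preventing this growth is precisely the hard part of Laurence's argument: it uses the relations $[x^{g_x},z^{g_z}]=1$ forced by edges of $\Gamma$, a careful normal-form analysis, and a non-arbitrary choice of which generator and which leading letter to strip, and it occupies most of \cite{Laurence}. Your plan names the right induction but leaves the decisive lemma as an unjustified assertion.
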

From Theorem \ref{thm:laurence} it follows that the group of inner automorphisms $\Inn(G)$ is a subgroup of $\LInn(G)$; and is therefore a normal subgroup.
\section{Stabilisers of Parabolic Centralisers}
\subsection{Stabiliser Subgroups} \label{sec:stab}
Throughout the remainder of this paper let $\G$ be a finite graph with vertices $X$, let $G=G(\G)$ and let $L=L(\G)$ be the lattice of closed sets of $\G$. We denote the automorphism group of $G$ by $\Aut(G)$.
\begin{defn}\label{defn:stab}
The {\em stabiliser} of $L$ is defined to be
\[
\St(L)=\{\phi\in \Aut(G): G(Y)^\phi=G(Y), \textrm{ for all } Y\in L\}
\]
and the {\em conjugate-stabiliser} of $L$ is defined to be
\begin{gather}\notag
\begin{split}
\cSt(L)=\{\phi\in \Aut(G):& \textrm{ there exists } g_Y \textrm{ such that } \\
&G(Y)^\phi=G(Y)^{g_Y}, \textrm{ for all } Y\in L\}.
\end{split}
\end{gather}
If $\phi\in \cSt(L)$, $Y\in L$ and $g_Y$ is such that $G(Y)^{\phi}=G(Y)^{g_Y}$ then we say that $\phi$ {\em acts as } $g_Y$ {\em on} $Y$.
\end{defn}

\begin{prop}\label{prop:stgp}
Both the stabiliser $\St(L)$ and the conjugate-stabiliser $\cSt(L)$ of $L$ are subgroups of $\Aut(G)$ and $\St(L)<\cSt(L)$.
\end{prop}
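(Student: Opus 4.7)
The statement is a routine verification of the subgroup axioms together with the obvious set-theoretic inclusion. The only point that requires care is keeping track of how the conjugating element $g_Y$ transforms under composition and inversion in the case of $\cSt(L)$.

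For $\St(L)$ I would proceed by direct computation. The identity automorphism stabilises every $G(Y)$ trivially. If $\phi, \psi \in \St(L)$ and $Y \in L$, then
\[
G(Y)^{\phi\psi} = (G(Y)^\phi)^\psi = G(Y)^\psi = G(Y),
\]
and applying $\phi^{-1}$ to both sides of $G(Y)^\phi = G(Y)$ yields $G(Y)^{\phi^{-1}} = G(Y)$. So $\St(L)$ is closed under composition and inversion.

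For $\cSt(L)$ the key ingredient is the standard identity that automorphisms commute with conjugation: for any subgroup $H\le G$, any $g\in G$ and any $\theta \in \Aut(G)$, one has $(H^g)^\theta = (H^\theta)^{g^\theta}$. Suppose $\phi, \psi \in \cSt(L)$ act as $g_Y$ and $h_Y$ respectively on $Y\in L$. Then
\[
G(Y)^{\phi\psi} = (G(Y)^{g_Y})^\psi = (G(Y)^\psi)^{g_Y^\psi} = G(Y)^{h_Y g_Y^\psi},
\]
so $\phi\psi \in \cSt(L)$ and acts as $h_Y g_Y^\psi$ on $Y$. For the inverse, applying $\phi^{-1}$ to both sides of $G(Y)^\phi = G(Y)^{g_Y}$ and using the same identity gives $G(Y) = (G(Y)^{\phi^{-1}})^{g_Y^{\phi^{-1}}}$; rearranging yields $G(Y)^{\phi^{-1}} = G(Y)^{(g_Y^{-1})^{\phi^{-1}}}$, and hence $\phi^{-1} \in \cSt(L)$.

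Finally, the inclusion $\St(L)\subseteq \cSt(L)$ is immediate by taking $g_Y=1$ for every $Y \in L$. I do not anticipate any genuine obstacle: the argument is essentially bookkeeping, and the only substantive piece is the twist $g_Y \mapsto h_Y g_Y^\psi$ in the composition formula, which foreshadows the semidirect product decomposition of $\cSt(L(\G))$ promised in the introduction.
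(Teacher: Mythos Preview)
Your proposal is correct and follows essentially the same approach as the paper: a direct verification of the subgroup axioms, with the identical formulas $(g_Y^{-1})^{\phi^{-1}}$ for inversion and $h_Y g_Y^{\psi}$ (the paper's $g_Y(g_Y')^{\phi}$, after relabelling) for composition in $\cSt(L)$. The only difference is that you spell out the case of $\St(L)$ explicitly whereas the paper declares it ``clear''.
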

\begin{proof}
It is clear that the stabiliser of $L$  is a group and that it is contained in the conjugate-stabiliser. If $\phi\in \cSt(L)$ acts as $g_Y$ on
$Y\in L$ then $G(Y)= (G(Y)^{h})^\phi$, where $h=(g_Y^{-1})^{\phi^{-1}}$.
Thus $G(Y)^{\phi^{-1}}=G(Y)^{h}$. If $\phi^\prime \in \cSt(L)$  acts as $g^\prime_Y$ on $Y$ then $G(Y)^{\phi^\prime\phi}=G(Y)^{k}$, where
$k=g_Y(g_Y^\prime)^\phi$.
\end{proof}
\subsection{Generators for the Stabiliser of $L$}
We introduce three sets of maps $J$, $V_\perp$ and $\Trp$: which will turn out to be automorphisms and to generate $\St(L)$ (cf.\cite{Servatius}, \cite{Laurence}).
First we recall some notation from \cite{DKR3} and establish some background information.

We define an equivalence relation  $\sim$ on $X$ by
$x\sim y$ if and only if $x^\perp=y^\perp$. (Note that
this is the relation $\sim_\perp$ of
\cite{DKR3}.) Denote the equivalence
class of $x$ under $\sim$ by $[x]$.
Then $[x]$ is a simplex for all $x\in X$.
The set $N_2$ consists of those $x\in X$ such that $|[x]|\ge 2$.
Define $N_1=X\backslash N_2$, $X^\prime=X/\sim$ and
$N_2^\prime=\{[x]\in X^\prime| x\in N_2\}$.
If $x\in N_1$ then  $[x]=\{x\}$ so
 $X$ is the disjoint union
\[
X=\sqcup_{[x]\in N^\prime_2}[x] \sqcup N_1.
\]

For $x\in X$ we write $G[x]_\perp$ for $G([x]_\perp)=\la [x]_\perp\ra$, so $G[x]_\perp \subseteq G$. For all $x,y,z\in X$ such that $y\in [x]_\perp$ we have $[y,z]=1$ if and only if $[u,z]=1$, for all $u\in [x]_\perp$. It follows that we may extend an automorphism $\phi$ of $G[x]_\perp$ to an automorphism $\phi^{\e_x}$ of $G$, by setting $g^{\phi^{\e_x}}=g^\phi$, for all $g\in G[x]_\perp$ and $x^{\phi^\e_x}=x$, for all $x\in X\backslash [x]_\perp$. The map $\e_x$ such that $\phi\mapsto \phi^{\e_x}$ is then an monomorphism from $\Aut(G[x]_\perp)$ into $\Aut(G)$. Moreover, if $[x]_\perp\neq [y]_\perp$ then $G[x]_\perp\cap G[y]_\perp=1$. Therefore  $\Aut(G[x]_\perp)^{\e_x}\cap \Aut(G[y]_\perp)^{\e_y}=1$ and, for all $\phi\in \Aut(G[x]_\perp)$ and $\psi \in \Aut(G[y]_\perp)$, we have $\phi^{\e_x}\psi^{\e_y}=\psi^{\e_y}\phi^{\e_x}$. Hence $\Aut(G)$ contains a subgroup
\[
V= \prod_{[x]\in N_2^\prime}\Aut(G[x]_\perp)^{\e_x}\times \prod_{x\in N_1}\Aut(G[x]_\perp)^{\e_x}.
\]
\be
    \item The set $J$ consists of the extension to $G$ of all maps $X\maps G$ such that $x^\phi=x$ or $x^{-1}$,  for all  $x\in X.$ Then $J\le \Aut(G)$ and $|J|=2^{|X|}$.
    \item We define
\[
V_\perp=\prod_{[x]\in N_2^\prime}\Aut(G[x]_\perp)^{\e_x}\le V.
\]
    \item For distinct $x,y\in X$ define a map $\tr_{x,y}:X\maps G$ by $\tr_{x,y}(x)=xy$, and $\tr_{x,y}(z)=z$, for $z\neq x$. If $x^\perp\bs \{x\}\subseteq y^\perp$ then $\tr_{x,y}$ extends to an automorphism of $G$ which we also call $\tr_{x,y}$. Define $\Tr$ to be the set consisting of the extension to $G$ of all the maps $\tr_{x,y}$, where $x^\perp\bs\{x\} \subseteq y^\perp$. Then $\Tr\subseteq \Aut(G)$. We define
\[
\Trp=\{\tr_{x,y}\in \Tr: \cl(y)<\cl(x)\}\subseteq \Tr.
\]
    Note that we have excluded those $\tr_{x,y}$ where $\cl(y)=\cl(x)$ and that $\cl(y)<\cl(x)$ implies that $y\in \cl(x)$ so that $x^\perp\subset y^\perp$ and $[x,y]=1$. Therefore the map $\tr_{x,y}\in \Trp$ if and only if $x^\perp \subset y^\perp$ (the inclusion being strict).
\ee

We remark that the subgroup generated by $J$ and $V_\perp$ is $V$. Moreover
Laurence \cite{Laurence}, building on results of Servatius \cite{Servatius}, showed
that $\Aut(G)$ is generated by $J$, $\Tr$ and  $\LInn(G)$, together with automorphisms which
permute the vertices of $\G$ (see for example \cite{DKR3}).

\begin{prop}\label{prop:IVTrSt}
$J$, $V$  and  $V_\perp$ are subgroups of $\St(L)$ and the set $\Trp$ is contained in $\St(L)$.
\end{prop}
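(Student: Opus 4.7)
The plan is to verify the defining condition $G(Y)^\phi=G(Y)$ for every $Y\in L$ directly, by a one-line check on each generator family, after first isolating two structural facts about closed sets: (a) if $Y\in L$ and $x\in Y$ then $\cl(x)\subseteq Y$, which follows from monotonicity of $\cl$ together with $Y=\cl(Y)$; and (b) every $Y\in L$ is a union of $\sim$-equivalence classes, because if $x\in Y=Y^{\perp\perp}$ and $x\sim z$ then $z^\perp=x^\perp\supseteq Y^\perp$, which forces $z\in Y^{\perp\perp}=Y$.

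For $J$: any $\phi\in J$ sends each generator $x$ to $x^{\pm 1}$, so whenever $x\in Y$ the image $x^\phi$ lies in $G(Y)$; as $J$ is closed under inversion this gives $G(Y)^\phi=G(Y)$ for every $Y\subseteq X$, not just for $Y\in L$. For $V_\perp$: fix $\phi=\psi^{\e_x}$ with $\psi\in\Aut(G[x]_\perp)$ and $[x]\in N_2^\prime$, and take $Y\in L$. By (b) either $[x]_\perp\cap Y=\emptyset$, in which case $\phi$ fixes every generator of $G(Y)$; or $[x]_\perp\subseteq Y$, in which case $\phi$ carries the generators of $[x]_\perp$ into $G[x]_\perp\subseteq G(Y)$ and fixes the remaining generators. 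Either way $G(Y)^\phi\subseteq G(Y)$, and the same argument applied to $\phi^{-1}$ gives equality. Since the text remarks that $V=\langle J,V_\perp\rangle$ and $\St(L)$ is a subgroup by Proposition~\ref{prop:stgp}, the inclusion $V\le\St(L)$ is automatic.

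For $\Trp$: let $\tr_{x,y}\in\Trp$, so $\cl(y)\subsetneq\cl(x)$ and in particular $y\in\cl(x)$. Given $Y\in L$, if $x\notin Y$ then $\tr_{x,y}$ fixes every generator of $G(Y)$ and we are done. If $x\in Y$ then (a) gives $\cl(x)\subseteq Y$, hence $y\in Y$, so $xy\in G(Y)$; since $y$ is itself fixed by $\tr_{x,y}$, the image of the generating set of $G(Y)$ contains $(xy)y^{-1}=x$ as well as every other generator of $Y$, yielding $G(Y)^{\tr_{x,y}}=G(Y)$. The only real content of the proof is isolating (a) and (b); once those are in place, each of the four claims reduces to inspecting the image of a single generator, and I do not anticipate any serious obstacle.
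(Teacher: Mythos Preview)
Your proof is correct and follows essentially the same route as the paper's: the paper also isolates exactly your facts (a) and (b) (the latter phrased as ``$x\in Y$ and $z\in[x]_\perp$ imply $z\in Y$''), then checks each family of generators in the same way, concluding with $V=\langle J,V_\perp\rangle$. The only cosmetic difference is that for $V_\perp$ you argue on a single factor $\psi^{\e_x}$ while the paper takes a general $\phi\in V_\perp$ and inspects each generator of $Y$; the content is identical.
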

\begin{proof}
As $J$ fixes every parabolic subgroup it is clear that $J\le \St(L)$. To see that $V_{\perp}$ is a subgroup of $\St(L)$ consider $Y\in L$. If $x\in Y$ and $z\in [x]_\perp$ then $Y^\perp\subseteq  x^\perp=z^\perp$ so $z\in Y$. Hence $x\in Y$ implies that $[x]_\perp\subseteq Y$. Now suppose that $\phi\in V_\perp$. If $y\in Y\cap N_2$ then $y^\phi\in G[y]_\perp\le G(Y)$, by the above. If $y\in Y\bs N_2$ then $y^\phi=y\in G(Y)$. Hence $G(Y)^\phi
\subseteq G(Y)$ and since $V_\perp$ is a subgroup of $\Aut(G)$ it follows that $G(Y)^\phi=G(Y)$. Therefore $V_\perp\le \St(L)$. Finally, let $\t=\tr_{x,y}\in \Trp$ and let $Y\in L$. If $x\notin Y$ then $\t$ fixes $Y$ pointwise, so we assume that $x\in Y$. In this case $\cl(x)\subseteq Y$ and $y\in \cl(x)$, from the remark following the definition of $\Trp$. Hence $x^\t=xy\in G(Y)$ and $G(Y)^\t\le G(Y)$. As $x=(xy^{-1})^\t$ it follows that $G(Y)^\t=G(Y)$.  As $V$ is generated by $J$ and $V_\perp$ it is also a subgroup of $\St(L)$.
\end{proof}

Before stating the next theorem we shall briefly explain what is meant by an arithmetic group.
Two subgroups $A$ and $B$ of a group $G$ are said to be commensurable if $A\cap B$ is
of finite index in both $A$ and $B$.
A linear algebraic group is a group which is also an affine algebraic variety, such that
multiplication and inversion are morphisms of affine algebraic varieties. A linear algebraic
group is said to be $\QQ$-defined if it is a subgroup of $\GL(n,\CC)$ which can be defined by
polynomials over $\QQ$ and is such that the group operations are morphisms defined over $\QQ$. Let
$G$ be a $\QQ$-defined algebraic group. A subgroup $A\subseteq G\cap \GL(n,\QQ)$ is called an
{\em arithmetic subgroup} of $G$ if it is commensurable with $G\cap \GL(n,\ZZ)$. A group
is called {\em arithmetic} if it is isomorphic to an arithmetic subgroup of a $\QQ$-defined
linear algebraic group.

\begin{thm}\label{thm:StLgen}
The stabiliser $\St(L)$ is an arithmetic group, generated by the elements of  $J$, $V_\perp$ and $\Tr_\perp$.
\end{thm}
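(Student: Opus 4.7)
The plan is to establish both assertions simultaneously by constructing a faithful linear representation of $\St(L)$. Given $\phi \in \St(L)$ and $x \in X$, preservation of $G(\cl(x))$ forces $x^\phi \in G(\cl(x))$, so the image of $x^\phi$ in the abelianisation $G(\cl(x))^{\mathrm{ab}} \cong \ZZ^{|\cl(x)|}$ is an integer vector supported on $\cl(x)$. Collecting these vectors over $x \in X$ produces a matrix $[\phi] \in \GL(n,\ZZ)$ whose $(y,x)$-entry vanishes whenever $y \notin \cl(x)$, i.e.\ unless $y <_L x$ or $y \sim x$. The first task is to check that $\phi \mapsto [\phi]$ is a well-defined homomorphism, and that its image lies in the $\ZZ$-points of the obvious $\QQ$-defined subgroup $\mathbf{H} \le \GL_n$ cut out by the $<_L$-triangularity conditions together with the block-invariance conditions coming from the $\sim$-classes; arithmeticity will then follow once the map is shown to be injective with image commensurable with $\mathbf{H}(\ZZ)$.

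Injectivity requires showing that if $[\phi]$ is the identity matrix then $\phi = \id$. Such a $\phi$ satisfies $x^\phi x^{-1} \in [G(\cl(x)),G(\cl(x))]$ for every $x \in X$, and since $\phi$ preserves \emph{every} canonical parabolic $G(Y)$ (not just the $G(\cl(x))$), a commutator-length argument combined with Theorem \ref{thm:laurence} should reduce matters to $\phi$ acting by conjugation, and then, using preservation of the full lattice, to the identity.

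For the generation claim I would, given $\phi \in \St(L)$, perform a Gaussian elimination on $[\phi]$ driven by the partial order on $L_X$. Processing the classes $\cl(x)$ in a linear extension of $<_L$, one uses transvections $\tr_{y,x}\in \Trp$ (with $\cl(y) \sac \cl(x)$) to clear those entries of $[\phi]$ of the form $(y,x)$ with $\cl(y) \subsetneq \cl(x)$. After these corrections the remaining matrix is block-diagonal along the $\sim$-classes of $X$, and on each block indexed by $[x] \in N_2'$ it is the matrix of an automorphism of $G[x]_\perp$ belonging to $\Aut(G[x]_\perp)^{\e_x} \le V_\perp$; a final adjustment by an element of $J$ absorbs the $\pm 1$ signs on $N_1$-entries. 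This exhibits $\phi$ as a product of elements of $\Trp$, $V_\perp$ and $J$, and simultaneously shows that the image of the representation fills $\mathbf{H}(\ZZ)$ up to the contribution of $V_\perp$ on the non-trivial $\sim$-blocks.

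The main obstacle I expect is matching the elimination step with the correct supply of automorphisms on the $\sim$-blocks: one must verify that every integer matrix satisfying the lattice-triangularity conditions and with admissible $\sim$-blocks genuinely lifts to an element of $\St(L)$, and conversely that no further constraints beyond those defining $\mathbf{H}$ are imposed by membership in $\St(L)$. Pinning down this identification of the image with $\mathbf{H}(\ZZ)$, at least up to finite index, is precisely what secures the arithmetic statement, and is presumably where the companion Theorem \ref{thm:key} — which determines the image explicitly — enters the argument.
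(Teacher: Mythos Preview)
Your overall strategy---represent $\St(L)$ by block-upper-triangular matrices in $\GL(n,\ZZ)$, establish injectivity, then perform Gaussian elimination with transvections to obtain generation and surjectivity onto the $\ZZ$-points of a $\QQ$-defined group---is exactly the paper's approach in Theorem~\ref{thm:key}.

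You are, however, missing one structural fact that removes most of the work you anticipate for injectivity: for every $x\in X$ the subgroup $G(\cl(x))$ is \emph{already free abelian}. Indeed $\cl(x)=x^{\perp\perp}\subseteq x^\perp$ (since $x\in x^\perp$), and if $y,z\in\cl(x)$ then $y\in x^{\perp\perp}$ commutes with every element of $x^\perp\supseteq\cl(x)\ni z$. Thus $[G(\cl(x)),G(\cl(x))]=1$, the matrix $[\phi]$ records $x^\phi$ exactly rather than only modulo commutators, and injectivity of $\phi\mapsto[\phi]$ is immediate---no commutator-length argument or appeal to Theorem~\ref{thm:laurence} is needed. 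For the same reason the paper shows that the image of $\pi$ is \emph{equal} to $S_X=\mathbf{H}(\ZZ)$, not merely commensurable with it: any matrix in $S_X$ lifts to an honest element of $\St(L)$ because each $x^\phi$ lives in an abelian group and there are no further relations to verify. This is precisely the ``obstacle'' you flag in your last paragraph, and it dissolves once you notice that every $\cl(x)$ is a simplex.

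One notational slip: to clear the entry recording the coefficient of $y$ in $x^\phi$ (with $\cl(y)\subsetneq\cl(x)$) you compose with powers of $\tr_{x,y}\in\Trp$, not $\tr_{y,x}$; in the paper's convention $\tr_{a,b}\in\Trp$ requires $\cl(b)\subsetneq\cl(a)$.
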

We defer the proof of  this theorem;  which is part of the more technical Theorem \ref{thm:key} below.
\subsection{Ordering $L$ and the stabiliser of $L$-maximal elements}\label{subsec:L-order}
We shall now define a partial order on elements of $x$ which reflects the lattice
structure of $L$. We shall then describe a subgroup of the automorphism group of $G$ which
stabilises subgroups generated by closures of the maximal elements in this order. First
note that if $Y\in L$ then $Y=\cup_{y\in Y}\cl(y)$.
Therefore if $\phi\in \Aut(G)$ and $G(\cl(y))^\phi=G(\cl(y))$, for all $y\in Y$
then $G(Y)^\phi =G(Y)$. This implies that if $G(\cl(x))^\phi=G(\cl(x))$,
for all $x\in X$ then $\phi\in \St(L)$.
Setting  $L_X=\{Y\in L:Y=\cl(x), \textrm{ for some } x\in X\}$ and $\St(L_X)
=\{\phi\in \Aut(G):  G(Y)^\phi=G(Y) \textrm{ for all } Y\in L_X\}$ then we have
\begin{equation}\label{eq:L-red}
\St(L)=\St(L_X).
\end{equation}
\begin{defn}
Let $<_L$ be the partial order on $X$ given by $x<_L y$ if and
only if $\cl(x)\subset \cl(y)$ and $\cl(x)\neq \cl(y)$.
By $x=_L y$ we mean $\cl(x)= \cl(y)$. We say $x$ is $L$-{\em minimal} ($L$-{\em maximal})
if $y\le_L x$  ($x\le_L y$) implies $\cl(y)=\cl(x)$.
\end{defn}
Note that $y<_L x$ if and only if $\cl(y)\subseteq \cl(x)$ and $x\notin \cl(y)$.

Let $L^\maxx=\{\cl(x)\in L_X\mid x \textrm{ is } <_L\textrm{-maximal}\}$.
Denote by $\St(L^\maxx) =\{\phi \in \Aut(G)\mid G(Y)^\phi=G(Y) \hbox{ for all } Y\in L^\maxx\}$.
\begin{prop}\label{prop:max-comm}
  $\St(L)$ and $\St(L^\maxx)$ are commensurable.
\end{prop}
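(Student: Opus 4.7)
The plan is to prove, first, that $\St(L) \leq \St(L^\maxx)$, and then to show that the index is finite. The inclusion is immediate from \eqref{eq:L-red}: since $\St(L) = \St(L_X)$ and $L^\maxx \subseteq L_X$, any automorphism stabilising all $G(Y)$ for $Y \in L_X$ certainly stabilises all $G(Z)$ for $Z \in L^\maxx$. Thus $\St(L) \cap \St(L^\maxx) = \St(L)$ and it remains to bound $[\St(L^\maxx) : \St(L)]$.

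To show this index is finite, I would exhibit a homomorphism
\[
\pi : \St(L^\maxx) \to \operatorname{Sym}(L_X), \qquad \pi(\phi)(Y) = Y',
\]
where $Y' \in L_X$ is characterised by $G(Y)^\phi = G(Y')$, and show that its kernel is exactly $\St(L_X) = \St(L)$. Since $L$ is a finite lattice, both $L_X$ and $\operatorname{Sym}(L_X)$ are finite, immediately giving the desired finite index. That $\pi$ is a homomorphism with kernel $\St(L)$ is formal once the definition makes sense, so the real content is \emph{well-definedness}: for every $\phi \in \St(L^\maxx)$ and $Y = \cl(x) \in L_X$, the image $G(Y)^\phi$ is again of the form $G(Y')$ for some $Y' \in L_X$.

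For this I would use the characterisation $G(\cl(x)) = C_G(C_G(x))$, which follows from $C_G(x) = G(x^\perp)$, the identity $\cl(x)^\perp = x^\perp$, and the description $G(Y) = C_G(Y^\perp)$ of canonical parabolics recalled in Section~\ref{sec:prelim}. Then $G(\cl(x))^\phi = C_G(C_G(\phi(x)))$, and by Proposition~3.9 of \cite{DKR4} this centraliser has the form $G(W)^g$ with $W \in L$ and $g \in G$. The hypothesis $\phi \in \St(L^\maxx)$ is then used to pin down both $W$ and $g$: choosing any $Z \in L^\maxx$ with $\cl(x) \subseteq Z$ gives $G(W)^g = G(\cl(x))^\phi \leq G(Z)^\phi = G(Z)$, which forces $g$ into $G(W)$ (so the conjugate is trivial) and places $W$ inside $Z$. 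The rigidity imposed by simultaneous stabilisation of all maximal parabolics $Z \in L^\maxx$ with $\cl(x) \subseteq Z$ should then be enough to identify $W$ with $\cl(x')$ for a single vertex $x' \in X$.

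The hard part is precisely this last identification: showing that the double centraliser $C_G(C_G(\phi(x)))$ is the closure subgroup of some \emph{single} vertex rather than of a general closed set, and that no nontrivial conjugation is required. Since $\phi(x)$ is an arbitrary element of $G(Z)$, its centraliser structure is in principle governed by its block decomposition and the quasiparabolic machinery of \cite{DKR4}; extracting the vertex $x'$ from this data using only the maximal-parabolic stabilisation is where the technical work lies. Once well-definedness is established, the kernel computation $\ker \pi = \St(L_X) = \St(L)$ and the bound $[\St(L^\maxx) : \St(L)] \leq |L_X|!$ are immediate.
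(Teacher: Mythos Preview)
Your overall strategy---construct a homomorphism from $\St(L^\maxx)$ to the symmetric group on a finite set and show its kernel lies in $\St(L)$---is exactly the paper's. The difference, and the source of your ``hard part'', is the choice of target set.

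You aim for a permutation of $L_X$ itself, which forces you to prove that $G(\cl(x))^\phi$ is again of the form $G(\cl(x'))$ for a \emph{single} vertex $x'$. You correctly identify this as the crux and leave it open. The paper sidesteps it entirely: instead of $L_X$ it uses the larger finite set $M=\bigcup_{Z\in L^\maxx} L(Z)$ of \emph{all} closed sets contained in some maximal closure. The key observation you are missing is that every $Z=\cl(z)$ is a simplex (if $y_1,y_2\in\cl(z)$ then each $y_i$ is adjacent to everything in $z^\perp$, in particular to the other), so $G(Z)$ is free abelian. Now for any $Y\in L$ with $Y\subseteq Z$, the image $G(Y)^\phi$ is a centraliser contained in the abelian group $G(Z)$, and the structure theory of centralisers in \cite{DKR4} then forces $G(Y)^\phi=G(V)$ for some $V\in L$ with $V\subseteq Z$---no conjugation survives, and no single-vertex condition is needed. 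Thus $\phi$ permutes $L(Z)$ for each $Z\in L^\maxx$, hence permutes $M$; since $L_X\subseteq M$, the kernel fixes every element of $L_X$ and therefore lies in $\St(L_X)=\St(L)$.

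Two further remarks on your sketch. First, your claim that $G(W)^g\le G(Z)$ ``forces $g$ into $G(W)$'' is not correct as stated: any $g\in C_G(W)=G(W^\perp)$ gives $G(W)^g=G(W)$, and $G(W^\perp)$ is typically much larger than $G(W)$. The vanishing of the conjugator really comes from the abelianness of $G(Z)$ together with the centraliser classification, not from this containment. Second, your double-centraliser computation $C_G(C_G(\phi(x)))$ can be pushed through directly once you use abelianness: since $\phi(x)\in G(Z)$ with $Z$ a simplex, the block decomposition of $\phi(x)$ has each letter as its own block, giving $C_G(\phi(x))=G(\alpha(\phi(x))^\perp)$ and hence $C_G(C_G(\phi(x)))=G(\cl(\alpha(\phi(x))))$. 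This lands you in $L$ but not obviously in $L_X$, which is exactly why the paper's enlargement to $M$ is the right move.
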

\begin{proof}
Since $\St(L)\subseteq \St(L^\maxx)$, it suffices to prove that $\St(L)$ has finite index in $\St(L^\maxx)$.
Let $\phi \in \St(L^\maxx)$, let $x\in X$, let $Z\in L^\maxx$, say $Z=\cl(z)$ for some $L$-maximal
element $z\in X$, and let $Y\in L$  such that $Y\subseteq Z$.
By  definition of $\St(L^\maxx)$ we have  $G(Z)^\phi=G(Z)$ and so
 $G(Y)^\phi\subseteq G(Z)^\phi =G(Z)$. As $Y\in L$ we have $Y=U^\perp$, for some $U\in L$, so $G(Y)=C_G(U)$, a
canonical parabolic centraliser. Hence $G(Y)^\phi$ is a centraliser and from \cite[Theorem 3.12]{DKR4}
is conjugate to a quasiparabolic subgroup. As $G(Z)$ is Abelian it follows (loc. cit.) that
$G(Y)^\phi$ is a canonical parabolic centraliser: that is $G(Y)^\phi=G(V)$ for some
$V\in L$ with $V\subseteq G(Z)$. As $\phi$ is an automorphism it therefore permutes the subgroups
$G(Y)$ where $Y$ is an element of
the set $L(Z)=\{V\in L: V\subseteq Z\}$ and this induces a permutation on the
set $L(Z)$. This holds for all elements $Z\in L^\maxx$ and setting $M=\cup_{Z\in L^\maxx} L(Z)$
we obtain a permutation $\sigma(\phi)$ of $M$.  It is clear that if $\psi$ is another element of
 $\St(L^\maxx)$ then $\sigma(\phi\psi)=\sigma(\phi)\sigma(\psi)$. Thus, writing
$S_M$ for the permutation group of the finite set $M$, we may view $\sigma$ as a homomorphism
$\sigma:\St(L^\maxx)\to S_M$.
If $\phi\in \ker(\sigma)$ then $\sigma(\phi)$ fixes every element of $M$ and in particular every
element of $L_X$. Hence $\ker (\sigma)\subseteq \St(L_X)$
and since from \eqref{eq:L-red} we have $\St(L_X)=\St(L)$ this completes the proof.
\end{proof}
\subsection{Ordering $X$ and closures of elements}\label{subsec:order}
Using the order induced from $L$ we shall define a total order on $X$.
This order will be used to index a basis of $\ZZ^n$ with respect to which elements of
$\St(L)$ will be described as matrices. The ordering
 depends on the following stratification of the closures of single elements of $X$.
\begin{prop}\label{prop:bdry}
For all $x\in X$
\[
[x]_\perp = \cl(x)\bs \{u\in \cl(y): y<_L x\}.
\]
In particular, if $x$ is $L$-minimal then $[x]_\perp=\cl(x)$.
\end{prop}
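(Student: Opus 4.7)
The plan is to verify both inclusions directly, exploiting the order-reversing duality between orthogonal complements and closures. The first step I would take is to establish the dictionary
\[
u\in \cl(y) \iff y^\perp\subseteq u^\perp \iff \cl(u)\subseteq \cl(y),
\]
which follows at once from $Y\subseteq Y^{\perp\perp}$, $Y^\perp=Y^{\perp\perp\perp}$, and the monotonicity of $\cl$ recorded in Section \ref{sec:graph}. Applying this twice gives the convenient characterisation $u\sim x \iff \cl(u)=\cl(x)$, turning the proposition into a purely lattice-theoretic statement about $L$.

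For the forward inclusion $[x]_\perp \subseteq \cl(x)\setminus \{u\in \cl(y):y<_L x\}$, I would take $u\in [x]_\perp$ and observe that $\cl(u)=\cl(x)$ forces $u\in \cl(u)=\cl(x)$; while if $u$ belonged to some $\cl(y)$ with $y<_L x$, the chain $\cl(x)=\cl(u)\subseteq \cl(y)\subsetneq \cl(x)$ would be an immediate contradiction.

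For the reverse inclusion, I would take $u\in \cl(x)$ lying outside every $\cl(y)$ with $y<_L x$, so that $\cl(u)\subseteq \cl(x)$. If this containment were strict then $u$ itself would satisfy $u<_L x$ and, since $u\in \cl(u)$, this would place $u$ in the subtracted set, contradicting the choice of $u$. Hence $\cl(u)=\cl(x)$, so $u\sim x$ and $u\in [x]_\perp$. The ``in particular'' clause falls out because $L$-minimality of $x$ leaves no $y<_L x$, making the subtracted set empty.

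I do not expect any serious obstacle: the whole argument reduces to bookkeeping with inclusions of closures. The only mildly subtle point worth flagging in advance is that in the reverse inclusion, $u$ itself serves as the witness vertex $y$ producing the contradiction, which is what prevents the subtracted set from cutting too deeply into $\cl(x)$.
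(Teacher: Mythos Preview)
Your proof is correct and essentially identical to the paper's: both first record that $u\in[x]_\perp\iff\cl(u)=\cl(x)$, then handle the forward inclusion by the chain $\cl(u)\subseteq\cl(y)\subsetneq\cl(x)$, and the reverse inclusion by observing that a strict containment $\cl(u)\subsetneq\cl(x)$ would make $u$ itself a witness $y<_L x$ with $u\in\cl(u)$. The only cosmetic difference is that the paper phrases the forward inclusion contrapositively.
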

\begin{proof}
First recall $\cl(z)=\cl(x)$ if and only if $z^\perp = x^\perp$ (as $Y^\perp=Y^{\perp\perp\perp}$), so $z\in [x]_\perp$ if and only if $\cl(z)=\cl(x)$.
If $u\in \cl(y)$, where $y<_L x$ then $\cl(u)\le \cl(y)<\cl(x)$ so $u^\perp \neq x^\perp$. Hence $[x]_\perp\subseteq \cl(x)\backslash\{u\in \cl(y): y<_L x\}$. On the other hand if $z\in \cl(x)$ then $\cl(z)\subseteq \cl(x)$. If also $z\notin \cl(y)$, for all $y<_L x$ then $z\not<_L x$, so $\cl(z)=\cl(x)$, as required.
\end{proof}
We  now define a total order $\prec$ on $X$, which will have the properties that
\be
    \item\label{it:prec1} if $x<_L y$ then $y\prec x$ and
    \item\label{it:prec2} if $z\prec y\prec x$ and $z\in [x]_\perp$ then $y\in [x]_\perp$.
\ee
To begin with let
\[
B_0=\{Y\in L_X:Y=\cl(x), \textrm{ where } x \textrm{ is } L\textrm{-minimal}\}.
\]
Suppose that $B_0$ has $k$ elements and choose an ordering
$Y_1<\cdots <Y_k$ of these elements.
If $i\neq j$ then $Y_i\cap Y_j\in L$ and from the remark at the beginning
of this section and the fact that the $Y_i$'s are $L$-minimal it follows
that $Y_i\cap Y_j=\emptyset$. Therefore we may
define the ordering $\prec$
on $\cup_{i=1}^k Y_i$ in such a way that if $x_i\in Y_i$ and $x_j\in Y_j$
and $Y_i<Y_j$ then $x_j\prec x_i$: merely by choosing an ordering for
elements of each $Y_i$.

We recursively define sets $B_i$ of elements of $L_X$, for $i\ge 0$,
as follows.
Assume that we have defined sets
$B_0,\ldots ,B_i$, set $U_i=\cup_{j=0}^i B_j$ and
define $X_i=\{u\in X: u\in Y, \textrm{ for some } Y\in U_i\}.$
If $U_i\neq L_X$
define $B_{i+1}$ by
\[B_{i+1}=\{Y=\cl(x)\in L_X:
 Y\notin U_i, \textrm{ and } y<_L x \textrm{ implies that } \cl(y)\in U_i
\}.\]
If $U_i\neq L_X$ then $X_i\neq X$ and $B_{i+1}\neq \nul$.
We assume inductively that we have ordered the set $X_i$ in such a way
that if $0\le a<b\le i$ then $x_a\in Y_a$ where $Y_a\in
B_a$ and $x_b\in Y_b$ where $Y_b\in B_b$ implies that
$x_b\prec x_a$.  From Proposition \ref{prop:bdry}, if $Y=\cl(x)\in B_{i+1}$
then
\begin{align*}
[x]_\perp
&=Y\bs \{u\in \cl(y):y<_L x\}\\
&=Y\bs \{u\in X_i\}.
\end{align*}
Therefore we have defined $\prec$ on the set $Y\bs [x]_\perp$.
Moreover, if $Y_1\neq Y_2$ and $Y_1,Y_2\in B_{i+1}$ then
$Y_1\cap Y_2\in L$ so $z\in Y_1\cap Y_2$ implies $\cl(z)\subseteq Y_1\cap Y_2$.
As $Y_1\neq Y_2$ this implies that $\cl(z)$ is strictly contained in
$Y_i$, $i=1,2$. If $Y_i=\cl(x_i)$ then $z<_L x_i$ and so $z\notin [x_i]_\perp$,
$i=1,2$. That is, $[x_1]_\perp\cap [x_2]_\perp=\emptyset$.
Now choose an ordering on the set of elements of $B_{i+1}$:
$Z_1<\cdots <Z_k$ say, where $Z_j=\cl(x_j)$.
Then $Z_j\bs [x_j]_\perp\subseteq X_i, j=1,\ldots ,k$.
We can extend the  total
order $\prec$ on $X_i$ to
\[
X_{i+1}=X_i\cup \cup_{j=1}^k Z_{j} =X_i\cup\cup_{j=1}^k[x_j]_\perp
\]
as follows.
Assume the order has already been extended to
$X_i\cup_{j=1}^{s-1} [x_{j}]_\perp$.
Extend the order further by choosing the ordering $\prec$ on
the elements of $[x_s]_\perp$ and then setting its greatest element
less
than the least element of $X_i\cup_{j=1}^{s-1}
[x_j]_\perp$.
At the final stage $s=k$ and the order on $X_i$ is extended to $X_{i+1}$.
We continue until $U_i=L_X$, at
which point $X=X_i$ and we have the required total order on $X$.
Note that, by construction,
if $x,y\in X$ and $x<_L y$ then
$y\prec x$. Also, if
$x\prec y\prec z$ and $[z]_\perp=[x]_\perp$ then $[y]_\perp =[x]_\perp$.
Thus \ref{it:prec1} and \ref{it:prec2} above hold.
If $\cl(x)$ belongs to $B_i$ we shall say that $x$, $\cl(x)$ and
$[x]_\perp$ have {\em height} $i$ and write $h(x)=h(\cl(x))=h([x]_\perp)
=i$.

\subsection{A matrix representation of $\St(L)$}\label{sec:matrix}
 Suppose that $X=\{x_1,\ldots ,x_k\}$ with $x_1\prec \cdots \prec x_k$.
If $x\in X$ and $\phi \in \St(L)$ then we have $x^\phi \in G(\cl(x))$. If
$\cl(x)=\{y_1,\ldots ,y_r\}$, where $x=y_1$ say, then as $G(\cl(x))$ is a
free Abelian group we may write
\begin{equation}\label{eq:phix}
x^\phi =y_1^{b_1}\cdots y_r^{b_r}.
\end{equation}
Setting $a_j=0$, if $x_j\notin \cl(x)$,
and $a_j=b_i$, if $x_j=y_i$, for some $i$, we can write
\[
x^\phi=x_1^{a_1}\cdots x_k^{a_k}.
\]
As this holds for all $x\in X$ we have
\begin{align*}
x_1^\phi& = x_1^{a_{1,1}} \cdots x_k^{a_{1,k}},\\
&\vdots\\
x_k^\phi &=  x_1^{a_{k,1}} \cdots x_k^{a_{k,k}}.
\end{align*}

Assume now that  $Y\in L$. Then $Y=\cup_{y\in Y} \cl(y)$ and,
 for $1\le i\le k$, either
$[x_i]_\perp\subseteq Y$ or $[x_i]_\perp \cap Y=\emptyset$.
Let $I=\{i:1\le i\le k\textrm{ and } [x_i]_\perp\subseteq Y\}$. Then,
from
Proposition \ref{prop:bdry}, it follows that
$Y=\cup_{i\in I}[x_i]_\perp$.
Moreover, for all $i$ such that $x_i\in Y$ we have
\[
x_i^\phi = \prod_{j\in I} x_j^{a_{i,j}}.
\]
We denote the restriction $\phi|_{G(Y)}$ of $\phi$ to $G(Y)$ by $\phi_Y$, for
any subset $Y$ of $X$ and $\phi\in \St(L)$. We shall also write $\phi_x$ instead of
$\phi_{\cl(x)}$, for $x\in X$.
\begin{defn}\label{defn:Mphi}
In the above notation, given $Y\in L$
we define the matrix
corresponding to the restriction $\phi_Y$ of $\phi\in \St(L)$ to
$G(Y)$
 to be
$[\phi_Y] =(a_{i,j})_{i,j\in I}$. If $Y=X$ we write $[\phi]$ for $[\phi_X]$.
\end{defn}
\begin{defn}\label{defn:minor}
Let $Y=\{y_1,\ldots ,y_r\}\in L$ with $y_1\prec \cdots \prec y_r$
and let $Z$ be a subset of $Y$.
Let $I=\{i:1\le i\le r,  y_i\in Z\}$.
Given $A=(a_{i,j})\in \GL(r,\ZZ)$ 
 we
define the $Z${\em -minor} of $A$ to be the matrix
$M(A,Y,Z)=(a_{i,j})_{i,j\in I}$. If $Y=X$ we write $M(A,Z)$ for $M(A,X,Z)$.
\end{defn}
The $Z$-minor of a matrix $A$ is
therefore the matrix obtained from $A$ by deleting
the $i$th
row and column for all $i$ such that $y_i\in Y\bs Z$.
From these definitions we have the following lemma.
\begin{lem}\label{lem:min}
Let $\phi\in \St(L)$ and $Y\in L$ then $[\phi_Y]=M([\phi],Y)$.
If $Z\subseteq Y\subseteq W$ are elements of $L$ then
$M(A,W,Z)=M(M(A,W,Y),Y,Z)$.
\end{lem}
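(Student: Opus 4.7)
The plan is to extract both claims directly from the setup in Section \ref{sec:matrix}, with no real computation required; the content of the lemma is essentially bookkeeping about which rows and columns survive when we restrict $\phi$ to a parabolic $G(Y)$.

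For the first assertion, I would start from the displayed formula preceding Definition \ref{defn:Mphi}: for $\phi \in \St(L)$ and $Y \in L$, if $I=\{i:x_i\in Y\}$, then every $x_i$ with $i\in I$ satisfies
\[
x_i^\phi = \prod_{j\in I} x_j^{a_{i,j}},
\]
so that in the full matrix $[\phi]=(a_{i,j})$ the entry $a_{i,j}$ vanishes whenever $i\in I$ and $j\notin I$. This is exactly what is needed: the ordering $\prec$ restricted to $Y$ writes $Y=\{y_1,\ldots,y_r\}$ with $y_1\prec\cdots\prec y_r$, and the matrix $[\phi_Y]$ of the restriction, read off from the expansions $y_i^\phi = \prod_{j} y_j^{b_{i,j}}$, has entries $b_{i,j}=a_{k,l}$ where $y_i=x_k$ and $y_j=x_l$. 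Thus $[\phi_Y]$ coincides with the $Y$-minor $M([\phi],Y)$ of Definition \ref{defn:minor}, proving the first claim.

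For the second assertion, I would argue purely combinatorially on indices. Write $W=\{w_1,\ldots,w_n\}$ with $w_1\prec\cdots\prec w_n$, let $I=\{i:w_i\in Y\}$ and $K=\{i:w_i\in Z\}$, so $K\subseteq I$ since $Z\subseteq Y$. By definition, $M(A,W,Y)=(a_{i,j})_{i,j\in I}$; then forming its $Z$-minor deletes precisely those indices in $I\setminus K$, leaving $(a_{i,j})_{i,j\in K}$. On the other hand the direct $Z$-minor $M(A,W,Z)$ is also $(a_{i,j})_{i,j\in K}$. Hence the two matrices are equal entry by entry.

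The only subtlety worth flagging is that in the second equality one must be sure that the ordering $\prec$ used to index rows and columns is compatible at every stage, i.e.\ that restricting $\prec$ from $W$ to $Y$ and then to $Z$ gives the same ordering as restricting directly from $W$ to $Z$; this is immediate because $\prec$ is a single fixed total order on $X$ and we are simply taking successive subsets. There is no real obstacle here — the lemma is a translation lemma, and both statements follow from the definitions once the key observation $a_{i,j}=0$ for $x_i\in Y,\ x_j\notin Y$ has been recorded.
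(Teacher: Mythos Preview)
Your proposal is correct and follows exactly the approach the paper intends: the paper states the lemma immediately after Definitions \ref{defn:Mphi} and \ref{defn:minor} with the remark ``From these definitions we have the following lemma'' and gives no further proof, so your unpacking of the definitions---including the key observation that $a_{i,j}=0$ for $x_i\in Y$, $x_j\notin Y$, and the index bookkeeping for the transitivity of minors---is precisely the intended argument, just made explicit.
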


For $x\in X$ we say that  $y\in [x]_\perp$ is the {\em minimal} element
of $[x]_\perp$ if $y\prec z$, for all $z\in [x]_\perp$.
Let $X^{\min}=\{x\in X: x \textrm{ is the minimal element of } [x]_\perp\}.$
Then $X=\cup_{x\in X}[x]_\perp=\sqcup_{x\in X^{\min}} [x]_\perp$. We extend
this notation to arbitrary $Y\in L$ by defining $Y^{\min} =X^{\min} \cap Y$;
so $Y=\sqcup_{y\in Y^{\min}} [y]_\perp$.

\subsection{Sets of Matrices Corresponding to Closed Sets}\label{subsec:SY}
We define a set of integer valued upper block-triangular matrices corresponding
to a closed set. Let $Y\in L$ and write $Y^{\min}=\{v_1,\ldots ,v_m\}$, where
$v_1\prec \ldots \prec v_m$.
Assume further that $Y=\{u_1,\ldots ,u_r\}$, where $u_1\prec \cdots \prec u_r$.
The set $S_Y$ is defined to the set of $r\times r$ integer valued
matrices
$A=(a_{i,j})$ such that the following conditions hold.
\be
\item\label{it:SY1} $A$ has $m$ diagonal blocks $A_1, \ldots ,A_m$, such that
$A_i\in \GL(|[v_i]_\perp|,\ZZ)$.
\item\label{it:SY2} If $i>j$ and $a_{i,j}$ is not part of a diagonal block then $a_{i,j}=0$.
\item\label{it:SY3} If $i<j$ and $a_{i,j}$ is not part of a diagonal block then
$a_{i,j}=0$ unless $u_j<_L u_i$; in which case $a_{i,j}$ may be any element of $\ZZ$.
\ee
The first two of these conditions imply that $A$ is an
upper block-triangular matrix.
Suppose that $A\in S_Y$ and has diagonal blocks $A_1,\ldots , A_m$.
Define the matrix $B$ to be the block-diagonal matrix with diagonal
blocks $A_1^{-1},\ldots ,A_m^{-1}$. Then $AB$ is a unipotent matrix
and it follows that $A\in \GL(r,\ZZ)$. Therefore $S_Y$ is a subset
of $\GL(r,\ZZ)$.

\begin{lem}\label{lem:Ssemi}\
\be
\item\label{it:Ssemi0} Elements of $S_Y$ are upper block-triangular
elements of $\GL(r,\ZZ)$.
\item\label{it:Ssemi1} If $\phi$ is an element of $J$, $V_\perp$ or $\Trp$ then
$[\phi]\in S_X$.
\item\label{it:Ssemi2}
The set $S_Y$ (with matrix multiplication) is a monoid, for all $Y\in L$.
\ee
\end{lem}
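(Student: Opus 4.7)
The plan is to dispatch the three parts in order, exploiting the fact that property (\ref{it:prec2}) of $\prec$ makes each equivalence class $[v]_\perp$ a contiguous interval under the ordering, so that the ``diagonal blocks'' of $S_Y$ really are blocks of consecutive indices.

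Part (\ref{it:Ssemi0}) I would settle by appealing to the paragraph immediately preceding the lemma: conditions (\ref{it:SY1}) and (\ref{it:SY2}) say that $A$ is upper block-triangular, and right-multiplying by the block-diagonal matrix with blocks $A_1^{-1},\dots,A_m^{-1}$ (itself in $\GL(r,\ZZ)$) produces a unipotent matrix, forcing $A\in \GL(r,\ZZ)$.

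For part (\ref{it:Ssemi1}) I would treat each generator family separately. For $\phi \in J$ the matrix $[\phi]$ is diagonal with entries $\pm 1$, so each diagonal block lies in the appropriate $\GL$ and (\ref{it:SY2})--(\ref{it:SY3}) hold vacuously. For $\phi \in V_\perp$, contiguity of each $[x]_\perp$ under $\prec$ means the equivalence classes are precisely the diagonal blocks of $S_X$; since every factor $\Aut(G[x]_\perp)^{\e_x}$ contributes a $\GL(|[x]_\perp|,\ZZ)$-matrix on its own block and the identity elsewhere, $[\phi]$ is block-diagonal and manifestly in $S_X$. For $\phi = \tr_{x,y} \in \Trp$, the hypothesis $\cl(y)\subsetneq \cl(x)$ gives $y\notin [x]_\perp$ and, by property (\ref{it:prec1}), $x\prec y$; thus $[\phi]$ differs from the identity only in the entry at row $x$, column $y$, which sits strictly above the diagonal block of $x$ and satisfies $u_j = y <_L x = u_i$, exactly as (\ref{it:SY3}) permits.

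The substantive part is (\ref{it:Ssemi2}). The identity plainly lies in $S_Y$, so I need only closure under multiplication. Given $A,B\in S_Y$, I would expand $(AB)_{i,j}=\sum_k a_{i,k}b_{k,j}$ and split on where $u_k$ sits relative to $[u_i]_\perp$ and $[u_j]_\perp$. The defining conditions of $S_Y$ force each nonzero $a_{i,k}$ to satisfy $u_k\sim u_i$ or $u_k <_L u_i$, and each nonzero $b_{k,j}$ to satisfy $u_k\sim u_j$ or $u_j <_L u_k$; combining the four cases via transitivity of $<_L$ yields $u_j <_L u_i$ whenever $(i,j)$ is off-block and the contribution is nonzero, thereby verifying (\ref{it:SY2}) and (\ref{it:SY3}) simultaneously. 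For a position $(p,q)$ inside a single diagonal block, contiguity of that block under $\prec$ together with the same case analysis shows that only indices $k$ inside the block can contribute, so the $i$-th diagonal block of $AB$ is $A_iB_i\in \GL(|[v_i]_\perp|,\ZZ)$, verifying (\ref{it:SY1}). The main obstacle will be the bookkeeping here: one must repeatedly invoke contiguity both to rule out $u_i\sim u_j$ at supposedly off-block positions and to upgrade mixed chains such as $u_j \sim u_k <_L u_i$ or $u_j <_L u_k \sim u_i$ to the strict relation $u_j <_L u_i$ demanded by (\ref{it:SY3}).
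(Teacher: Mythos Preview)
Your proposal is correct and follows essentially the same approach as the paper: the three generator families are handled identically, and for closure under multiplication both you and the paper reduce to the observation that $a_{i,k}b_{k,j}\neq 0$ forces $u_k\le_L u_i$ and $u_j\le_L u_k$, whence $u_j\le_L u_i$, with equality ruled out at off-block positions. The only cosmetic difference is that the paper packages the diagonal-block computation via the decomposition $A=A_D+A_N$, $B=B_D+B_N$ rather than your direct entrywise argument, but the content is the same.
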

\begin{proof}
If $\phi\in J$ then $[\phi]$ is a diagonal matrix with diagonal entries
$\pm 1$, so belongs to $S_X$. If $\phi\in V_\perp$ then
$\phi= \prod_{[x]\in N_2^\prime}\phi_x^{\e_x}$, for some
automorphisms $\phi_x\in \Aut(G[x]_\perp)$. Hence $[\phi]$ is block
diagonal, with a blocks of dimension $|[x]_\perp|$, for each $[x]\in N_2^\prime$ and
of dimension $1$ for all $x\in N_1$. It follows that $[\phi]\in S_X$.
Finally let $\phi=\tr_{x,y}\in \Trp$. Then $\cl(y)<\cl(x)$ so $x\prec y$
and if $x=x_i$ and $y=x_j$ then $[\phi]$ is the matrix with $1$s on the
leading diagonal, $a_{i,j}=1$ and zeroes elsewhere.
As $x_i\prec x_j$ we have $i<j$ and, as $x_j<_L x_i$, $a_{i,j}$ is not
in a diagonal block so this matrix belongs to
$S_X$.  Thus statement \ref{it:Ssemi1} holds.

To prove statement \ref{it:Ssemi2}
assume that $Y=\{u_1,\ldots ,u_r\}$ where $u_1\prec \cdots \prec u_r$.
Let $A, B\in S_Y$, $A=(a_{i,j})$, $B=(b_{i,j})$ and let $AB=C=(c_{i,j})\in
\GL(r,\ZZ)$. Then $c_{i,j}=\sum_{k=1}^r a_{i,k}b_{k,j}$. Suppose that
$A$ has
diagonal blocks $A_1, \ldots, A_m$. Let $A_D$ be the block-diagonal matrix
with diagonal blocks $A_1, \ldots, A_m$ and zeroes elsewhere. Let
$A_N=A-A_D$. Define $B_D$ and $B_N$ similarly, so $B_D$ is block
diagonal and $B=B_D+B_N$. Then $C=A_DB_D + A_DB_N + A_N(B_D+B_N)$. Therefore
$C$ is upper block-triangular with diagonal blocks $A_1B_1, \ldots,
A_mB_m$ and $A_iB_i\in \GL(|A_i|,\ZZ)$.

Suppose that
$i<j$ and $c_{i,j}$ does not belong  to a diagonal block.
If $i$ and $k$ are such that $u_k\not\le_L u_i$ then
$a_{i,k}=0$ and  if $k,j$ are such that $u_j\not\le_L u_k$ then $b_{k,j}=0$.
Hence $a_{i,k} b_{k,j}\neq 0$ implies that  $u_k\le_L u_i$ and
$u_j\le_L u_k$.
If $u_i=_L u_j$ then
$c_{i,j}$ belongs to a diagonal block, a contradiction. Hence
$u_j<_L u_i$.  Thus $c_{i,j}\neq 0$ implies $u_j<_L u_i$ and so $C\in S_Y$.
As the identity matrix is in $S_Y$ it follows that $S_Y$ is a monoid.
\end{proof}

We are now ready to prove Theorem \ref{thm:StLgen}, which is
the second statement of the following Theorem.
\begin{thm}\label{thm:key}
The map $\pi:\St(L)\maps \GL(|X|,\ZZ)$ given by $\phi\mapsto [\phi]$  is an injective homomorphism with image $S_X$. In particular $S_X$ is a group. Moreover the group $\St(L)$ is generated by the elements of $J$, $V_\perp$ and $\Trp$, and is an arithmetic group.
\end{thm}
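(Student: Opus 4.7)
The plan is to prove each assertion in turn, using that $\cl(x)$ is always a simplex (from $x\in x^\perp$ one has $\cl(x)=x^{\perp\perp}\subseteq x^\perp$, forcing pairwise commutativity), so $G(\cl(x))\cong\ZZ^{|\cl(x)|}$, together with the ordering properties \ref{it:prec1} and \ref{it:prec2}. That $\pi$ is a well-defined homomorphism follows from a direct computation in this free abelian group: writing $x^\phi=\prod_{x_j\in\cl(x)} x_j^{a_{i,j}}$, and observing that $\cl(x_j)\subseteq\cl(x)$ gives $x_j^\psi\in G(\cl(x_j))\subseteq G(\cl(x))$, substitution and commutativity yield $[\phi\psi]=[\phi][\psi]$. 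Injectivity is immediate: $[\phi]=I$ forces $x^\phi=x$ for every $x$, hence $\phi=\id$.

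For the inclusion $\pi(\St(L))\subseteq S_X$, conditions \ref{it:SY2} and \ref{it:SY3} follow because $a_{i,j}\ne 0$ forces $\cl(x_j)\subseteq \cl(x_i)$, i.e.\ $x_j\le_L x_i$, which by property \ref{it:prec1} leaves exactly the two possibilities: $x_j=_L x_i$ (a block entry, by property \ref{it:prec2} equivalent vertices being consecutive in $\prec$) or $x_j<_L x_i$ with $i<j$. Condition \ref{it:SY1} I would prove by induction on the height of $v\in X^{\min}$: by property \ref{it:prec2} and the construction of $\prec$, the restriction matrix $[\phi_{\cl(v)}]\in \GL(|\cl(v)|,\ZZ)$ is block-upper-triangular, with the $[v]_\perp\times[v]_\perp$ block above a square block indexed by $\cl(v)\setminus[v]_\perp$ (of strictly smaller height) below; the inductive hypothesis gives invertibility of the lower block, so the upper block has determinant $\pm 1$ and hence lies in $\GL(|[v]_\perp|,\ZZ)$.

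For $S_X\subseteq \pi(\St(L))$, given $A\in S_X$ I would factor $A = DU$ with $D$ the block-diagonal part and $U=D^{-1}A$. A short check using the allowed-support structure shows that $U$ is unipotent upper-triangular with nonzero entries only at positions $(i,j)$ satisfying $x_j<_L x_i$. The block-diagonal factor $D$ is realised by an element $\psi_0\in V=\langle J,V_\perp\rangle$: for $[v]\in N_2^\prime$ the isomorphism $\Aut(G[v]_\perp)\cong \GL(|[v]_\perp|,\ZZ)$ supplies a preimage of the block, contributed to $V_\perp$ via the extension $\e_v$, while for $v\in N_1$ the sign $\pm 1$ is supplied by $J$. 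The unipotent factor $U$ I would express as an ordered product of $\Trp$-transvections $\tr_{x_i,x_j}^{\,c_{i,j}}$ (corresponding to elementary matrices $I+c_{i,j}E_{i,j}$) by Gaussian-style entry clearing inside the nilpotent group of matrices with the prescribed sparsity. The key combinatorial closure is $E_{i,j}E_{k,l}=\delta_{j,k}E_{i,l}$, together with transitivity of $<_L$ (if $x_j<_L x_i$ and $x_l<_L x_j$ then $x_l<_L x_i$), ensuring that every product of admissible transvections remains supported on admissible positions. I expect verifying that the whole reduction can be carried out within $\Trp$ to be the most delicate step of the proof.

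Finally, for arithmeticity let $T=\{A\in M_n(\CC):A_{i,j}=0\text{ whenever }x_j\not\le_L x_i\}$; transitivity of $\le_L$ makes $T$ a $\QQ$-subalgebra of $M_n(\CC)$, so $H=T\cap\GL(n,\CC)$ is a $\QQ$-defined linear algebraic subgroup of $\GL(n,\CC)$. A determinant argument on the block-triangular form, $\det A=\prod_v \det D_v$, shows that integer matrices of the prescribed shape with $\det=\pm 1$ automatically have invertible diagonal blocks, so $S_X=H\cap\GL(n,\ZZ)=H(\ZZ)$, and hence $\St(L)\cong S_X$ is arithmetic by definition. The generation statement of Theorem \ref{thm:StLgen} is then exactly the factorisation $A=DU$ combined with the realisations of $D$ and $U$ via $J\cup V_\perp\cup\Trp$ established above.
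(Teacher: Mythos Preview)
Your proof is correct and follows essentially the same architecture as the paper's: both establish that $\pi$ is an injective homomorphism by direct computation, both realise an arbitrary $A\in S_X$ via the factorisation $A=DU$ with $D$ block-diagonal coming from $V=\langle J,V_\perp\rangle$ and the unipotent $U$ cleared by $\Trp$-transvections (the paper's Cases~1--3), and both identify $S_X$ as the $\ZZ$-points of a $\QQ$-defined linear algebraic group cut out by the vanishing conditions $a_{i,j}=0$ for $x_j\not\le_L x_i$.

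The one organisational difference is in how the inclusion $\pi(\St(L))\subseteq S_X$ is obtained. You prove it \emph{directly}: conditions~\ref{it:SY2} and~\ref{it:SY3} from the support constraint $x_j\le_L x_i$, and condition~\ref{it:SY1} by your height-induction on $[\phi_{\cl(v)}]$ to force each diagonal block into $\GL(|[v]_\perp|,\ZZ)$. The paper instead bootstraps: it first shows (Lemma~\ref{lem:Ssemi}) that $[\phi]\in S_X$ for every generator $\phi\in J\cup V_\perp\cup\Trp$ and that $S_X$ is a monoid, and then \emph{deduces} the inclusion from the generation statement $\St(L)=\langle J,V_\perp,\Trp\rangle$. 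This lets the paper avoid your inductive determinant argument entirely, at the cost of having to prove generation and containment simultaneously rather than sequentially. Your route is slightly more work but is cleaner logically, since it decouples the two directions; the paper's route is shorter but makes the argument for ``image $\subseteq S_X$'' depend on the harder generation result.
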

\begin{proof}
We shall first show that $\pi$ is an injective group homomorphism, from $\St(L)$ to $\GL(|X|,\ZZ)$. Assume that $|X|=k$, let $\phi,\psi\in \St(L)$ and
let $[\phi]=(a_{i,j})$ and $[\psi]=(b_{i,j})$. In the notation of Section \ref{subsec:order}, for $x_i\in X$ we have $x_i^{\phi\psi}=(x_1^{a_{i,1}})^\psi\cdots (x_k^{a_{i,k}})^\psi =\prod_{r=1}^k(x_1^{b_{r,1}}\cdots x_k^{b_{r,k}})^{a_{i,r}}=\prod_{j=1}^kx_j^{c_{i,j}}$, where $c_{i,j}=\sum_{r=1}^k a_{i,r}b_{r,j}$. Hence $[\phi\psi]=(c_{i,j})=[\phi][\psi]$. Therefore $\pi$ is a homomorphism and it is immediate from the definition that $\pi$ is injective.

Let $T$ denote the subgroup of $\St(L)$  generated by $J$, $V_\perp$ and $\Trp$  and recall that $V$ is the subgroup of $\St(L)$ generated by $J$ and $V_\perp$. From Lemma \ref{lem:Ssemi} it follows that $[\phi]\in S_X$, for all $\phi \in T$. Therefore once we have proved the final statement of the Lemma it will also follow that the image of $\pi$ is contained in $S_X$. The proof of the final statement will be broken into three cases and in each case we shall also verify that $\pi$  maps surjectively onto $S_X$.

Let $\phi\in \St(L)$ and $x\in X$ and suppose that $\cl(x)= \{y_1,\ldots ,y_r\}$. Then we can express $x^\phi$ as in \eqref{eq:phix}.Assume further that $[x]_\perp=\{y_1,\ldots ,y_s\}$, where $s\le r$. In the notation of \eqref{eq:phix}, if $b_j=0$, for all $j>s$, then $x^\phi\in G[x]_\perp$. Suppose this holds for all $x\in [x]_\perp$; so $\phi_{[x]_\perp} \in \Aut(G[x]_\perp)$. In this case we call $\phi$ a {\em block-diagonal} automorphism.
\subsubsection*{Case 1} \label{subsub:case1}
Let $\phi\in \St(L)$ be a block-diagonal automorphism and let $x\in X$. If $x\in N_2$ then $[x]\in N_2^\prime$ and $\phi_{[x]_\perp}^{\e_x} \in \Aut(G[x])^{\e_x}\subseteq V_\perp$. If $x\notin N_2$ then $[x]_\perp=\{x\}$ and $\phi_{[x]_\perp}^{\e_x}\in J$. In either case $\phi_{[x]_\perp}^{\e_x}\in V$ and, as the same is true of all $x\in X$,
\[
\phi =\prod_{[x]\in N_2^\prime}\phi_{[x]_\perp}^{\e_x} \cdot\prod_{x\in N_1}\phi_{[x]_\perp}^{\e_x}  \subseteq V\subseteq T.
\]

Now let $X=\{x_1,\ldots ,x_k\}$, where $x_1\prec \cdots \prec x_k$ and write $[\phi]=(a_{i,j})$. Let $X^{\min}=\{x_{i_1},\ldots ,x_{i_m}\}$, for some $m\ge 1$, with $x_{i_1}\prec \cdots \prec x_{i_m}$.  In this terminology what we have shown is the following.
\begin{gather}
\textrm{
If
 }
a_{i,j}=0,
\textrm{
for all
  }
i,j
\textrm{
such
that
  }
x_i\in [x_{i_n}]_\perp,
x_j\notin [x_{i_n}]_\perp,
\textrm{
then
  }
\phi_{[x_{i_n}]_\perp}^{\e_{i_n}}\in V, \notag\\
\textrm{ and if this holds for }
n=1,\ldots ,m,
\textrm{
then
}
\phi= \prod_{n=1}^m\phi_{[x_{i_n}]_\perp}^{\e_{i_n}} \in V\subseteq T.
\label{eq:Qcond}
\end{gather}
That is, if \eqref{eq:Qcond} holds then $\phi\in T$; and so $[\phi] \in S_X$.

On the other hand, let $A\in S_X$ be a block-diagonal matrix. Then $A$ has  diagonal blocks $A_n\in \GL(|[x_{i_n}]_\perp|,\ZZ)$, for $n=1,\ldots ,m$.
Here $A_n$ determines an automorphism, $\phi_n$ say, of $G[x_{i_n}]_\perp$,  and $\phi=\prod_{n=1}^m \phi_n^{\e_{x_{i_n}}}\in V$. Moreover $[\phi]=A$; so all block-diagonal matrices in $S_X$ are in the image of $\pi$.
\subsubsection*{Case 2}\label{subsub:case2}
Let $\phi\in \St(L)$ and $A=[\phi]$. Write $A=A_D+A_N$ as in the proof of Lemma \ref{lem:Ssemi}. In this case we assume that $A_D$ is the identity matrix.  This means that $x_i^\phi =x_iw_i$, where $w_i\in G(Y)$, for some $Y\subseteq \cl(x) \backslash [x_i]_\perp$, for $i=1,\ldots , k$.
Define $r=r(\phi)$ to be the maximal integer such that $x_r\in [x_{i_n}]_\perp$, for some $n$, and there exists $x_j$ such that $x_j\notin [x_{i_n}]_\perp$ but $a_{r,j}\neq 0$. Let $j=c=c(\phi)$ be maximal with this property. (The argument of Case 1 covers the case $r=0$.) As $a_{r,c}\neq 0$ we have $w_r=u x_c^{a_{r,c}}$, for some $u\in G(Z)$, where $x_r\notin Z$ and since $A_D$ is the identity $w_s\in G(Z_s)$, where $x_r\notin Z_s$, for
all $s>r$.

As $\phi\in \St(L)$ we have $x_r^\phi \in \cl(x_r)$ so $x_c\in \cl(x_r)$ which implies $\cl(x_c)\subseteq \cl(x_r)$. Since $x_c\notin[x_r]_\perp$ it follows that $\cl(x_c)\neq \cl(x_r)$. Hence $\tr_{x_r,x_c}\in \Trp\subseteq \St(L)$. Let $\phi_1=(\tr_{x_r,x_c})^{-a_{r,c}}\in T$. Then $x_r^{\phi_1}=x_rx_c^{-a_{r,c}}$ and $x_l^{\phi_1}=x_l$, for all $l\neq r$. Let $\phi_0= \phi\phi_1$; so $\phi_0\in \St(L)$. We have
\[
x_r^{\phi_0}=(x_rux_c^{a_{r,c}})^{\phi_1}=x_ru \textrm{ and } x_s^{\phi_0}=(x_sw_s)^{\phi_1}=x_sw_s=x_s^{\phi},
\]
for $s>r$. If $s<r$ then
\[
x_s^{\phi_0}=x_sw_s^{\phi_1}=
\left\{
\begin{array}{ll}
x_sw_sx_c^{a_{r,c}} =x_s^\phi x_c^{a_{r,c}}, & \textrm{ if } x_r \textrm{ occurs in }w_s\\
x_sw_s=x_s^\phi, & \textrm{ otherwise}.
\end{array}
\right.
\]
Therefore all diagonal blocks of $[\phi_0]$ are the identity matrix and either $r([\phi_0])<r$ or $c([\phi_0])<c$. We may then assume inductively that $[\phi_0]\in S_X$ and $\phi_0\in T$: so $\phi\in T$. Now define $E$ to be the matrix which has zero in every position except row $r$ column $c$, which is equal to $a_{r,c}$. Then $[\phi_1^{\pm 1}]=I\mp E\in S_X$ and from Lemma \ref{lem:Ssemi} it follows that $[\phi]=[\phi_0][\phi_1^{-1}]\in S_X$. By induction it follows that for all $\phi$ such that $[\phi]=A=A_D+A_N$, with $A_D$ the identity, we have $\phi\in T$ and $\phi^\pi\in S_X$. The same argument shows that if $A\in S_X$ and $A=A_D+A_N$, with $A_D$ the identity, then $A=\phi^\pi$, for some $\phi\in T$.
\subsubsection*{Case 3}
In the general case let $\phi\in \St(L)$ and write $[\phi]=A =A_D+A_N$ as before. Let $B=A_D^{-1}$. Then from Case 1, $B^{\pm 1}\in S_X$ and $B=[\sigma_B]$, for some $\sigma_B\in T$. Let $\zeta=\phi\sigma_B$. Then all diagonal blocks of $[\zeta]$ are the identity, so $\zeta \in T$ and $[\zeta]\in S_X$, from Case 2. Therefore $\phi\in T$ and $[\phi]\in S_X$. If we begin this argument with an arbitrary element $A$ of $S_X$ instead
of an element of $\St(L)$ it shows again that $A\in T^\pi$ and $A\in S_X$.

We now show that the group $\St(L)$ is an arithmetic group.  Let $K$ be the subgroup of
$\GL(|X|,\CC)$ satisfying
conditions (\ref{it:SY1}),(\ref{it:SY2})  and (\ref{it:SY3}) in the definition of $S_X$
above. Then $K$ is a $\QQ$-defined linear algebraic group. As
$S_X=K\cap GL(|X|,\ZZ)$ it now follows that $\St(L)$ is arithmetic.
\end{proof}

Combining the final statement of the theorem with Proposition \ref{prop:max-comm} we obtain  the
following corollary.
\begin{cor}\label{cor:max-arith}
$\St(L^\maxx)$ is an arithmetic group.
\end{cor}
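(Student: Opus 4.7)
The plan is to obtain the corollary as a direct formal consequence of Theorem \ref{thm:key} and Proposition \ref{prop:max-comm}, using the standard fact that arithmeticity of an abstract group is a commensurability invariant. Theorem \ref{thm:key} identifies $\St(L)$, via the injection $\pi$, with $K \cap \GL(|X|,\ZZ)$, where $K \le \GL(|X|,\CC)$ is the $\QQ$-defined linear algebraic group cut out by conditions (\ref{it:SY1})--(\ref{it:SY3}) in the definition of $S_X$. Proposition \ref{prop:max-comm} then asserts that $\St(L^\maxx)$ is commensurable with $\St(L)$, so the task reduces to promoting commensurability to arithmeticity.

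Setting $H = \St(L^\maxx)$, the first step is to locate a convenient arithmetic normal subgroup of finite index. This is already visible in the proof of Proposition \ref{prop:max-comm}: the permutation representation $\sigma\colon H \to S_M$ has kernel $N \subseteq \St(L)$, and since $S_M$ is finite, $N$ has finite index both in $H$ and in $\St(L)$. As finite-index subgroups of arithmetic groups are arithmetic, $N$ is itself arithmetic and normal in $H$. The second step is to deduce arithmeticity of $H$ from arithmeticity of $N$; this is the standard closure of the class of arithmetic groups under finite extensions, which may be realised concretely by embedding $H$ into a $\QQ$-defined linear algebraic group built from $K$ and the finite quotient $H/N$ --- for example via a semidirect product $K^{[H:N]} \rtimes (H/N)$ together with an embedding of $H$ using the regular representation on cosets of $N$ combined with the matrix representation of $N$ inherited from $\pi$.

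The main obstacle is this last formal step. It is not an obstacle of substance (the closure of arithmeticity under finite extensions is classical), but rather one of bookkeeping: the homomorphism $\pi$ does not extend verbatim from $\St(L)$ to $H$, because for $\phi \in H \setminus \St(L)$ we can have $\cl(x)^\phi \neq \cl(x)$, so $x^\phi$ lies in a canonical parabolic subgroup that depends on $\phi$. Packaging $H$ as a single $\QQ$-algebraic arithmetic group therefore requires either the finite-extension argument above, or the alternative route of building a faithful representation of $H$ by restriction to the Abelian groups $G(Z)$, $Z \in L^\maxx$, and verifying directly that the resulting image sits as an arithmetic subgroup of its Zariski closure. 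Either route invokes no new facts about partially commutative groups beyond those already in hand.
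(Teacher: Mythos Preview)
Your proposal is correct, and your Route A (closure of arithmeticity under commensurability and finite extensions) is a valid shortcut. The paper, however, essentially takes your Route B and makes it concrete, thereby dissolving the obstacle you flag. You worry that $\pi$ does not extend to $H=\St(L^\maxx)$ because $G(\cl(x))^\phi$ may differ from $G(\cl(x))$; but one does not need $G(\cl(x))$ to be $\phi$-invariant. Every $x\in X$ lies in $\cl(z)$ for some $L$-maximal $z$, the group $G(\cl(z))$ is free Abelian, and it \emph{is} fixed by every $\phi\in H$. Hence $x^\phi$ can always be written as $x_1^{a_1}\cdots x_k^{a_k}$, and $\phi\mapsto[\phi]$ is a monomorphism $H\hookrightarrow\GL(|X|,\ZZ)$ by the same one-line computation as in Theorem~\ref{thm:key}. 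Its image $S^\maxx$ contains $S_X$ with finite index, and the paper then exhibits $S^\maxx$ as the integer points of the $\QQ$-defined variety obtained by translating $K$ by a finite transversal for $S_X$ in $S^\maxx$. Thus the paper's argument buys an explicit matrix realisation of $\St(L^\maxx)$ inside the same $\GL(|X|,\ZZ)$ that already houses $\St(L)$, whereas your Route A trades this concreteness for brevity and an appeal to a general (but standard) principle.
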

\begin{proof}
From the proof of  Proposition \ref{prop:max-comm}
$\St(L)$ is finite index subgroup of $\St(L^\maxx)$.
Also the proof of
of Theorem \ref{thm:key}  goes through to show that
$\St(L^\maxx)$ is isomorphic to a subgroup
of $S^\maxx$ of $\GL(|X|,\ZZ)$. To see this,
for each $x\in X$ let
$M_x=\{z\in X: z \textrm{ is } L\textrm{-maximal and } x<_L z\}$.
Then $G(M_x)$ is Abelian and contains $x^\phi$, for all $\phi\in \St(L^\maxx)$.
Let $\phi\in \St(L^\maxx)$ and, as at the begining of Section \ref{sec:matrix}, write
$x^\phi=x_1^{a_1}\cdots x_k^{a_k}$, where $a_{j}\neq 0$ only if $x_j\in M_x$. As before this allows us
to associate an integer valued  matrix $[\phi]$ to $\phi$. The proof that the map $\phi\mapsto [\phi]$
is a monomorphism from $\St(L^\maxx)$ into  $\GL(|X|,\ZZ)$ is exactly the same as the first paragraph
of the proof of Theorem \ref{thm:key}. Thus $\St(L^\maxx)$ is isomorphic to
its image $S^\maxx\subseteq \GL(|X|,\ZZ)$. Moreover this monomorphism restricts to
$\St(L)$ to give the map $\pi$ and
so $S_X$ is a finite index subgroup of $S^\maxx$.

Keeping the notation of the proof of the previous theorem we have $S_X=K\cap \GL(|X|,\CC)$.
Now choose a transversal $a_1,\ldots ,a_s$ for cosets of $S_X$ in $S^\maxx$. Then
$g\in S^\maxx$ if and only if $ga_r^{-1}\in S_X\subseteq K$, for some $r$. As
$S^\maxx\in \GL(|X|,\ZZ)$ so $a_r^{-1}\in \GL(|X|,\ZZ)$, for all $r$. Hence the
condition that an element $h\in \GL(|X|,\CC)$ satisfies $h=ga_r^{-1}$, for some $g$,
can be expressed using $|X|^2$ polynomials with integer coefficients (namely the
entries of the matrix $a_r^{-1}$). Set $p=|X|$ and let these polynomials
be $m_{r,1,1}, \ldots, m_{r,p,p}$. (Thus if
$h=ga_r^{-1}=(h_{ij})$ then substitution of entries of $g$ for variables
of the $m_{r,i,j}$ gives $h_{ij}=m_{r,i,j}(g)$, for all $i,j$.) Suppose
 that the algebraic variety $K$ is defined by
polynomials $f_1,\ldots, f_l$. Then $g\in Ka_r$ if and only if $g$ satisfies the
polynomial equations $f_i(m_{r,1,1},\ldots, m_{r,p,p})$, $i=1,\ldots ,l$. As $f_i$ and
$m_{r,i,j}$ are polynomials with integer coefficients this implies that $Ka_r$ is
a $\QQ$-defined affine algebraic variety.
Thus $\cup_{r=1}^s Ka_r$ is a variety and
\[\left( \cup_{r=1}^s Ka_r\right)\cap \GL(|X|,\ZZ)=
\cup_{r=1}^s\left(K\cap \GL(|X|,\ZZ)\right)a_r
=\cup_{r=1}^s S_X a_r=S^\maxx
\]
so $\St(L^\maxx)$
is an arithmetic group.
\end{proof}

In the previous theorem we restricted attention to the entire group $\St(L)$ and
its isomorphic image $S_X$. However, we shall now show, the set $S_Y$ is a group,
for all closed sets $Y$ in $L$, and in fact all these groups are arithmetic.
By defining appropriate maps corresponding to inclusion, as follows, it can be seen that
the lattice $L$  maps contravariantly
to a sublattice of the lattice of subgroups of $\Aut(G)$.
If $Y,Z\in L$ with $Y\subseteq Z$ then $M(A,Y,Z)\in \GL(|Z|,\ZZ)$
and so $\rho(Y,Z):A\mapsto M(A,Y,Z)$ is a map from $S_Y$ to $\GL(|Z|,\ZZ)$.
\begin{lem}\label{lem:Mhom}
Let $Z, Y\in L$ with $Z\subseteq Y$.
The set $S_Y$ is an arithmetic group and
the map $\rho(Y,Z)$ is a surjective
homomorphism from $S_Y$ to $S_Z$. There is an injective
homomorphism $\e(Z,Y)$ from $S_Z$ to $S_Y$ such that
$\e(Z,Y)\rho(Y,Z)$ is the identity on $S_Z$.
\end{lem}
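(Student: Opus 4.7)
The plan is to deduce everything from Theorem \ref{thm:key} plus a single vanishing observation, which is the only place where real work is needed; from there $\rho(Y,Z)$ and the section $\e(Z,Y)$ are essentially forced.

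\textbf{Step 1 ($S_Y$ as a quotient of $S_X$).} The key observation is: if $A\in S_X$, $x_i\in Y$ and $x_k\notin Y$, then $a_{i,k}=0$. Indeed, since $Y$ is closed it is a disjoint union of classes $[v]_\perp$ ($v\in Y^{\min}$), so $x_i$ and $x_k$ lie in different diagonal blocks; condition (\ref{it:SY2}) handles $i>k$, and for $i<k$ a nonzero entry would by (\ref{it:SY3}) force $x_k<_L x_i$, whence $\cl(x_k)\subsetneq\cl(x_i)\subseteq Y$, contradicting $x_k\notin Y$. It follows that for $A,B\in S_X$ and $x_i,x_j\in Y$ the sum $\sum_k a_{i,k}b_{k,j}$ collapses to $\sum_{x_k\in Y} a_{i,k}b_{k,j}$, so $M(AB,X,Y)=M(A,X,Y)\,M(B,X,Y)$. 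For surjectivity, given $A\in S_Y$ define $\widetilde A$ to agree with $A$ on entries indexed by $Y$, to be $1$ on diagonal entries indexed by $X\setminus Y$, and to be $0$ elsewhere; since $X\setminus Y$ is itself a union of full $[v]_\perp$ classes, $\widetilde A$ satisfies (\ref{it:SY1})--(\ref{it:SY3}), so $\widetilde A\in S_X$ with $M(\widetilde A,X,Y)=A$. Thus $S_Y$ is the homomorphic image of the group $S_X$ (Theorem \ref{thm:key}) and is itself a group.

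\textbf{Step 2 (arithmeticity and $\rho(Y,Z)$).} Copying the final paragraph of the proof of Theorem \ref{thm:key}, let $K_Y\le\GL(|Y|,\CC)$ be the subgroup cut out by the polynomial conditions (\ref{it:SY1})--(\ref{it:SY3}); this is a $\QQ$-defined linear algebraic group, and $S_Y=K_Y\cap\GL(|Y|,\ZZ)$ is arithmetic. Applying Step 1 with $Z\subseteq Y$ in place of $Y\subseteq X$ yields the surjective homomorphism $\rho(Y,Z)=M(\cdot,Y,Z):S_Y\to S_Z$.

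\textbf{Step 3 (the section $\e(Z,Y)$).} Define $\e(Z,Y):S_Z\to S_Y$ by the analogous padding: $\e(Z,Y)(A)$ agrees with $A$ on entries indexed by $Z$, is $1$ on diagonal entries indexed by $Y\setminus Z$, and is $0$ elsewhere. The verification in Step 1 shows $\e(Z,Y)(A)\in S_Y$, and both injectivity and $\e(Z,Y)\rho(Y,Z)=\id_{S_Z}$ are immediate from the $Z$-minor definition. Multiplicativity of $\e(Z,Y)$ is a case split on whether row and column indices lie in $Z$: for an index in $Y\setminus Z$ the corresponding row or column of the padded matrix is a standard basis vector and passes through a product unchanged, while on the $Z$-block the computation is the one already performed in Step 1. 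The main obstacle, such as it is, is the vanishing observation of Step 1; the rest is bookkeeping around the block structure.
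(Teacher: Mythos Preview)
Your proof is correct and follows essentially the same route as the paper: the vanishing observation in your Step~1 (that $a_{i,k}=0$ whenever $x_i\in Y$, $x_k\notin Y$) is exactly the computation the paper uses to verify that $\rho(Y,Z)$ is multiplicative, and your padding construction of the section $\e(Z,Y)$ is identical to the paper's. The only cosmetic difference is that the paper works directly with a general pair $Z\subseteq Y$ from the outset and then specialises to $Y\subseteq X$ to conclude $S_Y$ is a group, whereas you do $Y\subseteq X$ first and then invoke the same argument for $Z\subseteq Y$; the content is the same.
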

\begin{proof}
We show that $\rho(Y,Z)$ is an surjective monoid homomorphism,
for all $Z\subseteq Y\in L$. Since $S_X$ is a group it will then
follow that $S_Y$ is a group, for all $Y\in L$. The proof that $S_Y$ is
arithmetic is then the same as for $S_X$, replacing $X$ by $Y$ throughout.
Let $Y^{\min}=\{v_1,\ldots ,v_m\}$, where $v_1\prec \cdots \prec v_m$.
Also let $Y=\{u_1,\ldots ,u_r\}$, where $u_1\prec \cdots \prec u_r$ and
let $I=\{i:1\le i\le r\textrm { and }u_i\in Z\}$.
Let $A=(a_{i,j})\in S_Y$ and suppose that $A$ has diagonal blocks
$A_1,\ldots ,A_m$.
As $A$ is upper block-triangular it follows from the definition that
$M(A,Y,Z)$ is upper block-triangular.
If $v_i\in Z$ then $[v_i]_\perp \subseteq Z$ and
the diagonal block containing $A_i$ is unaffected in the transformation
of $A$ to
$M(A,Y,Z)$. On the other hand if $v_i\notin Z$ then the diagonal
block $A_i$ is deleted in forming $M(A,Y,Z)$.
As $Z^{\min}=Y^{\min}\cap X$, the diagonal blocks of $M(A,Y,Z)$
satisfy condition \ref{it:SY1} of the definition of $S_Z$.

It remains to verify condition \ref{it:SY3}. Suppose that $i,j\in I$ and
that
$a_{i,j}\neq 0$ and $a_{i,j}$ does not belong to a diagonal block of
$M(A,Y,Z)$. From the above, $a_{i,j}$ does not belong to a
diagonal block of $A$, and
since $A$ is upper block-triangular and satisfies condition \ref{it:SY3},
$i<j$ and $u_j<_L u_i$.
Then
the same holds for $M(A,Y,Z)$, as required.
Therefore $\rho(Y,Z)$ maps $S_Y$ into $S_Z$.

To see that $\rho(Y,Z)$ is a homomorphism let $A=(a_{i,j})$ and $B=(b_{i,j})$
be elements of $S_Y$ and let $C=(c_{i,j})=AB$. From Lemma \ref{lem:Ssemi}, we
have $C\in S_Y$. Suppose that $i,j\in I$ and that $a_{i,k}b_{k,j}\neq 0$,
for some $k$. Then $u_k\le_L u_i$ and $u_i\in Z$. Hence $u_k\in \cl(u_k)
\subseteq \cl(u_i)\subseteq Z$. Therefore $i,j,k\in I$ and
$c_{i,j}=\sum_{k\in I} a_{i,k}b_{k,j}$. It follows that
$M(AB,Y,Z)=M(A,Y,Z)M(B,Y,Z)$, so $\rho(Y,Z)$ is a homomorphism.

To construct $\e(Z,Y)$ note that if $P\in S_Z$ then we may write
$P=(p_{i,j})_{i,j\in I}$, by writing $Z$ as a subset of $\{u_1,\ldots ,u_r\}$.
Then let the diagonal blocks of $P$ be $P_i$, where $i\in I$.
With this notation define an $r\times r$ integer matrix
$A$ by first of all setting $a_{i,j}=p_{i,j}$, for $i,j\in I$; then setting
$a_{i,i}=1$, for $i\notin I$, and finally setting $a_{i,j}=0$ for all
other $i,j$. Then $A$ is upper block-triangular and has blocks
$A_1,\ldots ,A_m$, where $A_i=P_i$, if $i\in I$ and $A_i$ is the identity
matrix in $\GL(|[v_i]_\perp|,\ZZ)$, otherwise. As $P$ satisfies
condition \ref{it:SY3} then so does $A$. Hence $A$ belongs to $S_Y$.
Define $M(P,Z,Y)=A$ (where $Z\subseteq Y$) and
$P^{\e(Y,Z)}=M(P,Z,Y)$, for all $P\in S_Z$. By definition
$P^{\e(Y,Z)\rho(Y,Z)}=P$, for all $P\in S_Z$, so $\e(Y,Z)$ is injective
and $\rho(Y,Z)$ is surjective. Moreover, from the definition,
$\e(Y,Z)$ is a homomorphism.

\end{proof}
\subsection{Restriction to closed sets}
Here we consider the restriction of automorphisms
in $\St(L)$ to subgroups $G(Y)$, where $Y$ is in $L$.
Given $Y\in L$ we
define $L(Y)=\{Z\in L: Z\subseteq Y\}$. Note that
$L(Y)$ is not in general the same as $L(\G(Y))$, the set of closed sets
of the full subgraph $\G(Y)$ of $\G$ generated by $Y$; although
$L\G(Y))\subseteq L(Y)$. We define $\St_Y(L)=
\{\phi_Y: \phi \in L \}$. Then $\St_Y(L)$ is a subgroup
of $\Aut(G(Y))$ and
is contained in the
subgroup of stabilisers, in $\Aut(G(Y))$, of $L(Y)$.

\begin{lem}\label{lem:StLY}
The
 map $\rho_Y:\St(L)\maps \St_Y(L)$ given by
$\phi\mapsto \phi_Y$ is a surjective homomorphism.
 The map
$\pi_Y:\St_Y(L)\maps S_Y$ given by $\phi_Y\mapsto [\phi_Y]$
 is an
isomorphism, so $\St_Y(L)$ is arithmetic,
for all $Y\in L$. Moreover  $\rho_Y\pi_Y=\pi_X\rho(X,Y)$.
\end{lem}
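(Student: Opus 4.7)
The plan is to derive everything from Theorem \ref{thm:key}, Lemma \ref{lem:min} and Lemma \ref{lem:Mhom}, essentially by exhibiting the commutative square
\[
\begin{CD}
\St(L) @>\rho_Y>> \St_Y(L) \\
@V\pi_XVV @VV\pi_YV \\
S_X @>>\rho(X,Y)> S_Y
\end{CD}
\]
and reading off the desired facts from it. First, since every $\phi\in\St(L)$ fixes $G(Y)$ setwise, the restriction $\phi_Y$ is a well-defined automorphism of $G(Y)$, and $(\phi\psi)_Y=\phi_Y\psi_Y$, so $\rho_Y$ is a homomorphism. Surjectivity of $\rho_Y$ is built into the definition $\St_Y(L)=\{\phi_Y:\phi\in\St(L)\}$.

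Next I would verify the identity $\rho_Y\pi_Y=\pi_X\rho(X,Y)$. Applied to $\phi\in\St(L)$, the left hand side produces $[\phi_Y]$, while the right hand side produces $M([\phi],X,Y)$; these are equal by Lemma \ref{lem:min}. This identity simultaneously shows that $\pi_Y$ is well-defined (if $\phi_Y=\psi_Y$ then $[\phi_Y]=[\psi_Y]$, but we obtain the stronger statement that the right-hand composite descends through the surjection $\rho_Y$), that its image lies in $S_Y$ (since $\pi_X$ lands in $S_X$ and $\rho(X,Y)$ sends $S_X$ into $S_Y$ by Lemma \ref{lem:Mhom}), and that $\pi_Y$ is a homomorphism (since $\rho_Y$ is a surjective homomorphism and $\pi_X\rho(X,Y)$ is a homomorphism as a composite of two).

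For injectivity of $\pi_Y$: if $[\phi_Y]$ is the identity of $S_Y$, then $x^{\phi_Y}=x$ for every $x\in Y$, so $\phi_Y$ is trivial on $G(Y)$. For surjectivity: given $A\in S_Y$, apply the embedding $\e(Y,X):S_Y\hookrightarrow S_X$ of Lemma \ref{lem:Mhom} to obtain $A'\in S_X$ with $M(A',X,Y)=A$, then use Theorem \ref{thm:key} to produce $\phi\in\St(L)$ with $[\phi]=A'$. Lemma \ref{lem:min} then gives $[\phi_Y]=M([\phi],Y)=M(A',X,Y)=A$, so $\pi_Y(\phi_Y)=A$. Hence $\pi_Y$ is an isomorphism of $\St_Y(L)$ onto $S_Y$, and because $S_Y$ is an arithmetic group by Lemma \ref{lem:Mhom}, so is $\St_Y(L)$.

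The only place where any care is needed is the surjectivity step of $\pi_Y$: one has to be sure that an arbitrary matrix in $S_Y$ is actually realised by the restriction of some automorphism in $\St(L)$, rather than merely by an automorphism of $G(Y)$. This is precisely what the compatibility $\e(Y,X)\rho(X,Y)=\id_{S_Y}$ in Lemma \ref{lem:Mhom} buys us, together with the realisability of every element of $S_X$ via $\pi_X$ granted by Theorem \ref{thm:key}; once that is in hand the rest is bookkeeping.
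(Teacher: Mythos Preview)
Your proof is correct and follows essentially the same route as the paper: both establish the commutative square $\rho_Y\pi_Y=\pi_X\rho(X,Y)$ via Lemma \ref{lem:min}, deduce that $\pi_Y$ is a well-defined homomorphism because the composite $\pi_X\rho(X,Y)$ factors through the surjection $\rho_Y$, and then read off injectivity directly and surjectivity from the surjectivity of $\rho(X,Y)$ together with Theorem \ref{thm:key}. The only cosmetic difference is that you invoke the section $\e(Y,X)$ explicitly for surjectivity, whereas the paper simply cites that $\theta=\pi_X\rho(X,Y)$ is surjective; since the surjectivity of $\rho(X,Y)$ in Lemma \ref{lem:Mhom} is itself proved using $\e(Y,X)$, this amounts to the same argument.
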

\begin{proof}
Let $\phi ,\psi \in \St(L)$. Then
$x^\phi \in G(Y)$, for all $x\in Y$, so $x^{\phi\psi}=(x^\phi)^\psi=
(x^{\phi_Y})^{\psi_Y}$, for all $x\in Y$. Hence $(\phi\psi)_Y=
\phi_Y\psi_Y$ and $\rho_Y$ is a homomorphism;  surjective onto its image
which is, by definition,
 $\St_Y(L)$.

From Theorem \ref{thm:key} the map $\pi_X=\pi$ is an isomorphism from
$\St(L)$ to $S_X$. From Lemma \ref{lem:Mhom} the map $\rho(X,Y)$ is
a surjective homomorphism from $S_X$ to $S_Y$. Let $\theta =\pi_X\rho(X,Y)$.
An element $\phi\in \St(L)$ belongs to $\ker(\rho_Y)$ if $x^\phi=x$, for
all $x\in Y$: in which case $[\phi_Y]$ is the identity matrix of
dimension $|Y|$. Hence the diagonal blocks of $\phi^{\pi_X}$ corresponding
to $[x]_\perp\subseteq Y$ are identity matrices; and $\phi^\theta=I$, the
 $|Y|$-dimensional identity matrix. This shows  that $\ker(\rho_Y)
\subseteq \ker(\theta)$, so $\theta$ induces a homomorphism from $\St_Y(L)$
to $S_Y$. The image of $\phi_Y$ under this homomorphism is
$\phi^{\theta}=[\phi]^{\rho(X,Y)}=M([\phi],Y)$ and from the definitions
we have $\phi_Y^{\pi_Y}=M([\phi],Y)$. Therefore $\pi_Y$ is a homomorphism
and $\rho_Y\pi_Y=\pi_X\rho(X,Y)$.
As $\theta$ is surjective, so is $\pi_Y$.  If $\phi_Y^{\pi_Y}=I$ then
$x^{\phi_Y}=x$, for all $x\in Y$, so $\phi_Y$ is the identity on $G(Y)$ and
$\pi_Y$ is injective.
\end{proof}
If $Z\subseteq Y\in L$ and $\phi\in \St(L)$ we define $\rho_{Y,Z}$ to
be the map sending $\phi\in \St_Y(L)$ to $\phi|_{G(Z)}\in \St_Z(L)$.
\begin{cor}
Let $Y,Z\in L$ with $Z\subseteq Y$. Then $\rho(X,Z)=\rho(X,Y)\rho(Y,Z)$
 and $\rho_Z=\rho_Y\rho_{Y,Z}$. Moreover  $\rho_{Y,Z}$ is surjective
and $\pi_Y\rho(Y,Z)=\rho_{Y,Z}\pi_Z$.
\end{cor}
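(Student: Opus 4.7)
The four assertions are essentially formal consequences of Lemmas \ref{lem:min} and \ref{lem:StLY}, together with the definition of $\rho_{Y,Z}$ as restriction of automorphisms. The plan is to dispatch them in the order they are stated, reducing each to a statement already proved.

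First I would establish $\rho(X,Z)=\rho(X,Y)\rho(Y,Z)$. For $A\in S_X$ this is just $M(A,X,Z)=M(M(A,X,Y),Y,Z)$, which is the second half of Lemma \ref{lem:min}. Next, for $\rho_Z=\rho_Y\rho_{Y,Z}$ let $\phi\in \St(L)$; by definition $\rho_Y(\phi)=\phi|_{G(Y)}$ and then $\rho_{Y,Z}(\phi|_{G(Y)})=(\phi|_{G(Y)})|_{G(Z)}$, which, since $Z\subseteq Y$, agrees with $\phi|_{G(Z)}=\rho_Z(\phi)$.

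For the surjectivity of $\rho_{Y,Z}$, observe that by definition $\rho_Z:\St(L)\to \St_Z(L)$ is surjective (it is onto its codomain by the way $\St_Z(L)$ is defined). From the factorisation $\rho_Z=\rho_Y\rho_{Y,Z}$ just established, $\rho_{Y,Z}$ must map onto $\St_Z(L)$ as well.

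Finally, to prove $\pi_Y\rho(Y,Z)=\rho_{Y,Z}\pi_Z$ (an equality of maps $\St_Y(L)\to S_Z$), let $\psi\in \St_Y(L)$. By surjectivity of $\rho_Y$ we may write $\psi=\phi_Y$ for some $\phi\in \St(L)$. Then, on one side,
\[
\psi^{\pi_Y\rho(Y,Z)}=[\phi_Y]^{\rho(Y,Z)}=M([\phi_Y],Y,Z)=M(M([\phi],X,Y),Y,Z)=M([\phi],X,Z),
\]
using the identity $[\phi_Y]=M([\phi],Y)$ from Lemma \ref{lem:min} and the same lemma's iterated-minor formula. On the other side,
\[
\psi^{\rho_{Y,Z}\pi_Z}=(\phi_Y|_{G(Z)})^{\pi_Z}=\phi_Z^{\pi_Z}=[\phi_Z]=M([\phi],X,Z),
\]
where we have used $\rho_Z=\rho_Y\rho_{Y,Z}$ in the first step and Lemma \ref{lem:StLY} together with Lemma \ref{lem:min} at the end. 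The two expressions agree, so the claimed identity holds. The only mildly subtle point, which one should double-check, is the compatibility used in the last display, namely that the $Z$-minor of $[\phi]$ really equals the matrix $[\phi_Z]$ associated to the restriction; but this is precisely the content of $\rho_Y\pi_Y=\pi_X\rho(X,Y)$ applied with $Y$ replaced by $Z$, already established in Lemma \ref{lem:StLY}.
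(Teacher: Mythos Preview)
Your proposal is correct and follows essentially the same approach as the paper, which simply cites Lemmas \ref{lem:min}, \ref{lem:Mhom} and \ref{lem:StLY}; you have merely unpacked the citations into explicit computations. The only reference you have left implicit is Lemma \ref{lem:Mhom}, which guarantees that $\rho(Y,Z)$ really lands in $S_Z$ so that the composite $\rho(X,Y)\rho(Y,Z)$ is defined, but this is a minor point and your argument goes through.
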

\begin{proof}
This follows from Lemmas \ref{lem:min}, \ref{lem:Mhom} and \ref{lem:StLY}.
\end{proof}

The various maps we have defined are illustrated in the commutative
diagram of Figure \ref{fig:maps}.
\begin{figure}
\begin{center}
\psfrag{St(L)}{$\St(L)$}
\psfrag{Sty(L)}{$\St_Y(L)$}
\psfrag{Stz(L)}{$\St_Z(L)$}
\psfrag{Sx}{$S_X$}
\psfrag{Sy}{$S_Y$}
\psfrag{Sz}{$S_Z$}
\psfrag{py}{$\rho_Y$}
\psfrag{pz}{$\rho_Z$}
\psfrag{pyz}{$\rho_{Y,Z}$}
\psfrag{Pxy}{$\rho(X,Y)$}
\psfrag{Pyz}{$\rho(Y,Z)$}
\psfrag{Pxz}{$\rho(X,Z)$}
\psfrag{mx}{$\pi_X$}
\psfrag{my}{$\pi_Y$}
\psfrag{mz}{$\pi_Z$}
\includegraphics[scale=0.6]{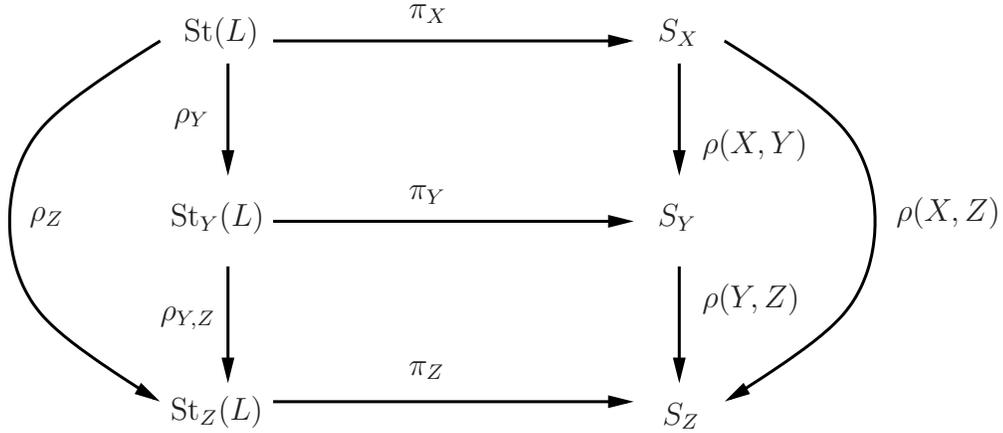}
\end{center}
\caption{Maps defined on subgroups of $\St(L)$}\label{fig:maps}
\end{figure}
\subsection{The structure of $\cSt(L)$}

First we examine the structure of $\St_Y(L)$ and $S_Y$,
for an arbitrary closed
set $Y$.
\begin{defn}\label{defn:SYsubs}
Let $Y\in L$ and $Y=\cup_{i=1}^m [v_i]_\perp$,
where
$Y^{\min}=\{v_1,\ldots ,v_m\}$ with $v_1\prec \cdots \prec v_m$.
Denote by $D_Y$ the set of block diagonal elements of $S_Y$;
with diagonal blocks
$A_1, ..., A_m$ such that $A_i\in \GL(|[v_i]_\perp|,\ZZ)$. Denote
by $U_Y$ the subset of $S_Y$ consisting of matrices for  which
$A_i$ is the identity matrix of $\GL(|[v_i]_\perp|,\ZZ)$,
for $i=1,\ldots ,m$.
\end{defn}

\begin{lem}\label{lem:SYstr}
Let $Y\in L$ and $Y=\cup_{i=1}^m [v_i]_\perp$,
where
$Y^{\min}=\{v_1,\ldots ,v_m\}$ with $v_1\prec \cdots \prec v_m$.
Both $U_Y$ and $D_Y$ are subgroups of $S_Y$ and
$S_Y=U_Y\rtimes D_Y$. Moreover
\[D_Y=\prod_{i=1}^m G[v_i]_\perp.
\]
\end{lem}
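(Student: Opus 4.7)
The plan is to realize $S_Y$ as a semidirect product by splitting off the block-diagonal part. I will show (a) that $D_Y$ and $U_Y$ are subgroups of $S_Y$, (b) that every $A\in S_Y$ factors uniquely as $A=UD$ with $U\in U_Y$ and $D\in D_Y$, and (c) that $U_Y$ is normal in $S_Y$. Throughout I use that $S_Y$ is already known to be a group by Lemma~\ref{lem:Mhom}, and that the support conditions \ref{it:SY1}--\ref{it:SY3} in the definition of $S_Y$ are preserved by matrix multiplication, exactly as verified in the proof of Lemma~\ref{lem:Ssemi}.

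That $D_Y$ is a subgroup is immediate: block-diagonal matrices with blocks $A_i\in \GL(|[v_i]_\perp|,\ZZ)$ are closed under product and inversion and satisfy condition \ref{it:SY3} vacuously. Reading off the tuple of diagonal blocks gives an isomorphism $D_Y\cong \prod_{i=1}^m \GL(|[v_i]_\perp|,\ZZ)$. Since any two elements of $[v_i]_\perp$ commute in $G$ (if $z_1,z_2\in [v_i]_\perp$ then $z_1^\perp = v_i^\perp = z_2^\perp$, and $z_2\in z_2^\perp$ forces $z_1\in z_2^\perp$, so $z_1$ and $z_2$ are joined by an edge of $\G$), the subgroup $G[v_i]_\perp$ is free abelian of rank $|[v_i]_\perp|$ and hence $\GL(|[v_i]_\perp|,\ZZ)\cong \Aut(G[v_i]_\perp)$, yielding the stated identification of $D_Y$ with $\prod_{i=1}^m G[v_i]_\perp$ (read as the product of the corresponding automorphism groups).

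For $U_Y$, write $A=I+N$ and $B=I+M$, where $N$ and $M$ are supported on positions $(i,j)$ satisfying $i<j$ and $u_j<_L u_i$. Then $AB=I+(N+M+NM)$, and the $(i,j)$-entry of $NM$ is a sum of $N_{i,k}M_{k,j}$ over $k$ with $u_k<_L u_i$ and $u_j<_L u_k$; by transitivity $u_j<_L u_i$ at every nonzero entry, so $AB\in U_Y$ by the computation of Lemma~\ref{lem:Ssemi}. Since $N$ is strictly upper triangular (as $i<j$ at each nonzero entry), it is nilpotent, so $A^{-1}=\sum_{k\ge 0}(-N)^k$ is a finite integer sum whose support lies in the same set by the same transitivity argument; hence $A^{-1}\in U_Y$ and $U_Y$ is a subgroup.

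For the decomposition, given $A\in S_Y$ with diagonal blocks $A_1,\ldots,A_m$, let $D\in D_Y$ be the block-diagonal matrix with these blocks. Then $AD^{-1}\in S_Y$ has identity diagonal blocks and inherits the off-block support of $A$, so $AD^{-1}\in U_Y$ and $A=(AD^{-1})D$. A block-diagonal element of $U_Y$ must be $I$, so $U_Y\cap D_Y=\{I\}$. For normality it suffices to check $DAD^{-1}\in U_Y$ for $A\in U_Y$ and $D\in D_Y$: writing $A=I+N$, the $(i,j)$-entry of $DND^{-1}$ is a $\ZZ$-linear combination (via the diagonal blocks of $D$ and $D^{-1}$) of the corresponding entries of $N$ in the same off-block position, so it vanishes whenever $N_{i,j}$ does, preserving the support condition. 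Combined with $U_Y$ being a subgroup and the decomposition $S_Y=U_Y D_Y$, this yields $SAS^{-1}\in U_Y$ for all $S\in S_Y$. The only delicate bookkeeping -- transitivity of $<_L$ for the off-block support -- is identical to what appeared in Lemma~\ref{lem:Ssemi}, so no new obstacle arises; the hardest conceptual step is the normality check, but it reduces to the same support-preservation calculation.
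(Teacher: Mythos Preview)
Your argument is correct and follows essentially the same route as the paper: both establish the semidirect product by splitting off the block-diagonal part. The paper's packaging is slightly more economical: it defines the map $d:S_Y\to D_Y$, $A\mapsto A_D$, checks it is a homomorphism (via the block computation of Lemma~\ref{lem:Ssemi}), observes $\ker(d)=U_Y$, and notes that the inclusion $D_Y\hookrightarrow S_Y$ is a section of $d$; normality of $U_Y$ and the semidirect decomposition then come for free, without the separate subgroup, intersection, and conjugation checks you carry out.

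One small imprecision in your normality step: the $(i,j)$-entry of $DND^{-1}$ is a $\ZZ$-linear combination of the entries $N_{k,l}$ with $u_k\in[u_i]_\perp$ and $u_l\in[u_j]_\perp$, not just of $N_{i,j}$, so it does \emph{not} ``vanish whenever $N_{i,j}$ does''. The conclusion is unaffected, however, because the support condition ``$u_j<_L u_i$'' depends only on the classes $[u_i]_\perp$ and $[u_j]_\perp$ and is therefore constant across each off-diagonal block rectangle.
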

\begin{proof}
Let $A\in S_Y$ with diagonal blocks $A_1,\ldots ,A_m$ and define
$A_D$ to be the block-diagonal matrix with diagonal blocks $A_1,\ldots ,A_m$,
and let $d$ be the map sending $A$ to $A_D$.
Then $(A^d)^{\pi_Y^{-1}}$ is clearly an element of $\St_Y(L)$ and
so $A^d\in S_Y$. Hence $A^d\in D_Y$ and
$d$ is a surjective map from $S_Y$ to $D_Y$. If $B$ is also in
$S_Y$ and has diagonal blocks $B_1,\ldots, B_m$ then, as in the proof
of \ref{lem:Ssemi}, $AB$ has blocks $A_1B_1,\ldots, A_mB_m$, so
$d$ is a homomorphism and $D_Y$ is a group.
If $A\in S_Y$ then $A\in \ker(d)$ if and only if every diagonal block
of $A$ is an identity matrix. Hence $\ker(d)=U_Y$. If $i$ is the inclusion
of $D_Y$ in $S_Y$ then $id$ is the identity map on $D_Y$ and so
$S_Y= U_Y\rtimes D_Y$, as claimed.
That $D_Y=\prod_{i=1}^m G[v_i]_\perp$
is immediate from the definitions.
\end{proof}
In the case where $Y$ is the closure of a single element of $X$ we have
the following corollary of the above results.
\begin{cor}
\label{cor:Stkey}
\be
\item\label{it:Stkey1} $[\phi_x]\in S_x$, for all $\phi\in \St(L)$.
\item\label{it:Stkey2}  There are automorphisms $\phi_{x,s}$ and $\phi_{x,u}$ of $G(\cl(x))$
such that $\phi_x=\phi_{x,s}\phi_{x,u}$ and $[\phi_{x,s}]$ is the block-diagonal
matrix with diagonal blocks $A_1,\ldots ,A_m$ and $[\phi_{x,u}]$ is an
upper unitriangular
matrix (the unipotent part of $[\phi_x]$).
\item\label{it:Stkey3}  Given $A\in S_x$ there exists $\psi\in \St(L)$ such that $[\psi_x]=A$.
\item\label{it:Stkey4}  If $y<_L x$ then $[\phi_y]=M([\phi_x],\cl(y))$.
\item\label{it:Stkey5}  The set $S_x$ is a group.
\ee
\end{cor}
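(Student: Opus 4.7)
The plan is to deduce each of the five statements by specialising the general results already established for closed sets $Y\in L$ to the particular case $Y=\cl(x)$, which is a closed set by definition (it is the closure of $\{x\}$). Since all of the heavy lifting has been done in Lemmas \ref{lem:Mhom}, \ref{lem:StLY}, and \ref{lem:SYstr}, together with Theorem \ref{thm:key}, the corollary should essentially be a bookkeeping exercise.

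For \ref{it:Stkey1}, I would apply Lemma \ref{lem:StLY} with $Y=\cl(x)$: the isomorphism $\pi_{\cl(x)}:\St_{\cl(x)}(L)\to S_{\cl(x)}=S_x$ immediately gives $[\phi_x]=\phi_x^{\pi_x}\in S_x$. For \ref{it:Stkey5}, I would invoke Lemma \ref{lem:Mhom}, which asserts that $S_Y$ is an arithmetic group for every $Y\in L$; taking $Y=\cl(x)$ shows that $S_x$ is a group. For \ref{it:Stkey3}, surjectivity of $\rho_Y\pi_Y=\pi_X\rho(X,Y)$ (from Lemma \ref{lem:StLY} combined with Lemma \ref{lem:Mhom}) applied to $Y=\cl(x)$ produces, for any $A\in S_x$, an element $\psi\in\St(L)$ with $[\psi_x]=A$.

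For \ref{it:Stkey2}, I would apply Lemma \ref{lem:SYstr} to $Y=\cl(x)$: this gives $S_x=U_x\rtimes D_x$, so the matrix $[\phi_x]$ factorises uniquely as $[\phi_x]=BC$ with $B\in D_x$ block-diagonal with blocks $A_1,\ldots ,A_m$ and $C\in U_x$ upper unitriangular. Since $\pi_x$ is an isomorphism, I would then set $\phi_{x,s}=\pi_x^{-1}(B)$ and $\phi_{x,u}=\pi_x^{-1}(C)$, obtaining automorphisms of $G(\cl(x))$ with $\phi_x=\phi_{x,s}\phi_{x,u}$ as required. For \ref{it:Stkey4}, I would use Lemma \ref{lem:min}: when $y<_L x$ we have $\cl(y)\subseteq\cl(x)$, both closed, so Lemma \ref{lem:min} yields $[\phi_y]=M([\phi],\cl(y))=M(M([\phi],\cl(x)),\cl(y))=M([\phi_x],\cl(y))$ (using the already noted convention that $M(-,\cl(y))$ is shorthand for $M(-,\cl(x),\cl(y))$ when the ambient set is clear).

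There is no real obstacle here; the only small point to be careful about is the distinction between restriction as an automorphism (producing $\phi_x$, $\phi_y$) and restriction as a minor of the ambient matrix (producing $M([\phi],\cl(x))$, $M([\phi_x],\cl(y))$), and making sure the chain of identifications $\pi_{\cl(x)}$, $\rho(X,\cl(x))$, $\rho_{\cl(x)}$ is compatible. This compatibility is precisely the commutativity of the diagram in Figure \ref{fig:maps}, and the Corollary following Lemma \ref{lem:StLY} guarantees that composing restrictions commutes with composing the minor maps, which is exactly what is needed for \ref{it:Stkey4}.
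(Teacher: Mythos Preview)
Your proposal is correct and follows essentially the same approach as the paper: specialising the general results about $S_Y$, $\St_Y(L)$, and the maps $\pi_Y$, $\rho_Y$, $\rho(X,Y)$ to the case $Y=\cl(x)$. The paper's own proof is even terser---it simply cites Lemma~\ref{lem:StLY} for \ref{it:Stkey1}, \ref{it:Stkey3}, \ref{it:Stkey4}, Lemma~\ref{lem:SYstr} for \ref{it:Stkey2}, and (somewhat imprecisely) Lemma~\ref{lem:min} for \ref{it:Stkey5}---so your slightly more explicit attributions (in particular invoking Lemma~\ref{lem:Mhom} for \ref{it:Stkey5}) are if anything an improvement.
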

\begin{proof}
\ref{it:Stkey1}, \ref{it:Stkey3} and \ref{it:Stkey4} follow from
Lemma \ref{lem:StLY}. \ref{it:Stkey5} follows from Lemma \ref{lem:min}
and \ref{it:Stkey2}
follows from Lemma \ref{lem:SYstr}.
\end{proof}
\begin{thm}
$\cSt(L)=\LInn(G) \rtimes St(L)$.
\end{thm}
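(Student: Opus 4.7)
The decomposition $\cSt(L)=\LInn(G)\rtimes \St(L)$ requires four checks: (i) both $\LInn(G)$ and $\St(L)$ are contained in $\cSt(L)$; (ii) their intersection is trivial; (iii) $\cSt(L)=\LInn(G)\cdot\St(L)$; and (iv) $\LInn(G)$ is normal in $\cSt(L)$. The inclusion $\St(L)\subseteq\cSt(L)$ is built into Definition~\ref{defn:stab}. For $\LInn(G)\subseteq\cSt(L)$, since $\LInn(G)$ is generated by the elementary conjugating automorphisms $\a_C(z)$ of Definition~\ref{defn:elia}, it suffices to show that for every such generator and every $Y\in L$, $G(Y)^{\a_C(z)}$ is a conjugate of $G(Y)$; this reduces to a case analysis on how the component $C$ of $\G_{z^\perp}$ meets $Y$.

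For (ii), suppose $\phi\in\LInn(G)\cap\St(L)$ and fix $x\in X$. Theorem~\ref{thm:laurence} yields $g_x\in G$ with $x^\phi=x^{g_x}$, while $\phi\in\St(L)$ forces $x^\phi\in G(\cl(x))$. Since the elements of $\cl(x)=x^{\perp\perp}$ all lie in $x^\perp$ and pairwise commute, $G(\cl(x))$ is free abelian, embedding canonically into $G^{\mathrm{ab}}=\ZZ^X$ as the coordinate subspace indexed by $\cl(x)$. The image of $x^{g_x}$ in $G^{\mathrm{ab}}$ is the basis vector $e_x$; combined with $x^{g_x}\in G(\cl(x))$, this forces $x^{g_x}=x$ already in $G$. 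Hence $\phi$ fixes every generator and $\phi=\id$.

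For (iii), given $\phi\in\cSt(L)$, for each $Z\in L_X$ choose $h_Z\in G$ with $G(Z)^\phi=G(Z)^{h_Z}$ and set $g_x=h_{\cl(x)}$. The strategy is to adjust the $h_Z$ within their cosets modulo $N_G(G(Z))$ so that the assignment $x\mapsto x^{g_x}$ extends to a conjugating automorphism $\psi\in\LInn(G)$ and so that $\sigma:=\psi^{-1}\phi$ preserves each $G(\cl(x))$ setwise, whence $\sigma\in\St(L_X)=\St(L)$ by~\eqref{eq:L-red}. For (iv), I exploit that $\cl(x)$ is a simplex, so that $\cl(x)\cap(X\setminus z^\perp)$ lies in a single connected component of $\G_{z^\perp}$: for $\sigma\in\St(L)$ and a generator $\a_C(z)$ of $\LInn(G)$, the element $u=\sigma^{-1}(x)\in G(\cl(x))$ splits as $u=u_1u_2$ with $u_1\in G(\cl(x)\cap C)$ and $u_2\in G(\cl(x)\cap z^\perp)$, and since $z$ commutes with $u_2$ one computes $\sigma(\a_C(z)(u))=x^{\sigma(z)}$, a conjugate of $x$; hence $\sigma^{-1}\a_C(z)\sigma\in\LInn(G)$ by Theorem~\ref{thm:laurence}.

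The principal obstacle will be the coherent choice of conjugators $h_Z$ in step (iii): for $\psi$ to be well-defined as an automorphism, the assignment $x\mapsto x^{g_x}$ must respect every commutation relation $[x,y]=1$, which forces $g_x$ and $g_y$ to be aligned modulo appropriate centralisers. This is made possible by the fact that $G(\cl(x)\cup\cl(y))$ is abelian whenever $[x,y]=1$, together with the freedom to choose each $h_Z$ modulo $N_G(G(Z))$: the lattice structure of $L$ allows these choices to be fixed inductively, ensuring compatibility at every stage.
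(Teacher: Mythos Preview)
Your outline into four steps is the same as the paper's, and your treatments of (i), (ii) and (iv) are sound. The abelianization argument for (ii) is in fact tidier than the paper's version (which argues that $x^{w_x}\in G(\cl(x))$ forces $w_x\in G(x^\perp)$). In (iv) you only check that $\St(L)$ normalises $\LInn(G)$, not that all of $\cSt(L)$ does; this is fine provided (iii) is already in hand, since then $\cSt(L)=\LInn(G)\cdot\St(L)$, but you should say so. Also in (iv) you tacitly assume the single component of $\G_{z^\perp}$ meeting $\cl(x)$ is $C$; when it is a different component $\a_C(z)$ fixes $G(\cl(x))$ pointwise and the conclusion is immediate, so this is a minor omission.

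The real gap is (iii), and it is exactly the obstacle you flag but do not resolve. You propose to build the $\LInn(G)$-factor first, choosing conjugators $g_x=h_{\cl(x)}$ so that $x\mapsto x^{g_x}$ extends to an automorphism $\psi$ with $\psi^{-1}\phi\in\St(L)$. This imposes two coupled constraints: for every commuting pair $x,y$ one needs $[x^{g_x},y^{g_y}]=1$, and for every $y\in\cl(x)$ with $\cl(y)\subsetneq\cl(x)$ one needs $y^{g_y}\in G(\cl(x))^{g_x}$, so that $\psi(G(\cl(x)))$ is the \emph{correct} conjugate, not merely some conjugate. Your sentence ``the lattice structure of $L$ allows these choices to be fixed inductively'' is where the whole proof lives, and nothing in the proposal indicates how to carry it out.

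The paper avoids this by constructing the $\St(L)$-factor first. Given $\theta\in\cSt(L)$, fix for each $Y\in L$ a conjugator $g_Y$ with no left divisor in $G(Y)$ or $C_G(Y)$, write $x^\theta=u_x^{g_x}$ with $u_x\in G(\cl(x))$, and set $\phi(x)=u_x$. The key technical claim, proved via the cancellation theory of \cite{DKR2}, is that for nested non-empty closed sets $Y\subseteq Z$ one has $g_Yg_Z^{-1}=ab$ with $a\in G(Z)$ and $b\in C_G(Y)$; this is precisely what forces $[u_x,u_y]=1$ whenever $[x,y]=1$, so $\phi$ extends to an endomorphism, which is then shown to be bijective and to lie in $\St(L)$. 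Once that is done, $\tau=\theta\phi^{-1}$ sends each $x$ to a conjugate of itself and hence lies in $\LInn(G)$ by Theorem~\ref{thm:laurence}, with no further coherence to check. Your route can be made to work, but it needs the same compatibility lemma and more bookkeeping; the paper's order of construction is what makes the argument go through cleanly.
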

\begin{proof}
First we show that $\LInn(G) \subseteq\cSt(L)$.
As $\LInn(G)$ is generated by automorphisms of the form given
in Definition \ref{defn:elia} it suffices to show that $\phi=\a_C(y)$
belongs to $\cSt(L)$,
 where $y\in X$ and $C$ is a connected component of the full
subgraph on $X\backslash y^\perp$.
Let $V\in L$, so $V=T^\perp$, for some $T\in L$.
If $y\in V$ then $G(Y)^\phi=G(V)$, so assume
$y\notin V$. Let $v_1,v_2\in V$. If $v_1\in y^\perp$ then $v_1^\phi=
v_2^\phi$, so assume $v_i\notin y^\perp$, for $i=1,2$.
Now $y\notin V$ implies there exists some $t\in T$ such that $[y,t]\neq 1$.
As $v_i\in T^\perp$ and $v_i\notin y^\perp$ it follows that $v_1,v_2$ and
$t$ lie in a connected component of $\G(X\backslash y^\perp)$. In particular
$v_1^\phi=v_2^\phi$. Therefore either $G(V)^\phi=G(V)$ or $G(V)^\phi
=G(V)^y$ and so $\phi\in \cSt(L)$ as required.

Next we show that $\LInn(G) \lhd \cSt(L)$. It suffices to show
that $\theta^{-1}\phi\theta \in\LInn(G)$, where $\phi$ is defined as above.
Let $x\in X$ and let $g\in G$ such that $G(\cl(x))^\theta=G(\cl(x))^g$.
Then $G(\cl(x))^{\theta^{-1}}=G(\cl(x))^h$, where $h=(g^{-1})^{\theta^{-1}}$.
Thus $x^{\theta^{-1}}=w^h$, for some $w\in \cl(x)$, so $x=(w^\theta)^{h\theta}$
and $x^g=w^\theta$. Now $x^{\theta^{-1}\phi\theta}=
(w^h)^{\phi\theta}=((w^\phi)^{h^\phi})^\theta$ so
\[x^{\theta^{-1}\phi\theta}
=\left\{
\begin{array}{ll}
((w^y)^{h^\phi})^\theta=(w^\theta)^{y^\theta h^{\phi\theta}}=
x^{gy^\theta h^{\phi\theta}}, &
\textrm{ if } \cl(x)\subseteq C\cup y^\perp\\
(w^{h^\phi})^\theta=(w^\theta)^{h^{\phi\theta}}=x^{gh^{\phi\theta}}, &
\textrm{ otherwise}
\end{array}
\right.
.
\]
Therefore $\theta^{-1}\phi\theta$ is a conjugating automorphism.

Now we shall show that any element $\theta\in \cSt(L)$ can be expressed
as $\theta=\tau\phi$, for some $\tau\in \LInn(G)$ and $\phi \in \St(L)$.
Fix $\theta \in \cSt(L)$ and, for all $Y\in L$ fix $g_Y\in G$
such that $G(Y)^\theta=G(Y)^{g_Y}$. Without loss of generality
we may choose $g_Y$ so that none of its left divisors belong to $G(Y)$
or to $C_G(Y)=G(Y^\perp)$.
Given
two non-empty closed sets $Y, Z\in L$, with
$Y\subseteq Z$ we claim that
$g_{Y}g_Z^{-1}=ab$ where $a\in G(Z)$ and $b\in C_G(Y)$.
To see this suppose that $u\in G(Y)$ and let
$r\in G(Y)$, $s\in G(Z)$ such that $u^\theta = r^{g_Y}=s^{g_Z}$.
From \cite[Corollary 2.4]{DKR2} and the choice of $g_Y$ and $g_Z$ there
exist $c, c^\prime,d_1,d_1^\prime, d_2, d_2^\prime,v\in G$ such that
$g_Y=c\circ d_2$, $g_Z=c^\prime \circ d_2^\prime$,
$r=d_1^{-1}\circ v\circ d_1$, $s=d_1^{\prime -1} \circ v\circ d_1^\prime$,
and with $d=d_1\circ d_2$ and $d^\prime=d_1^\prime \circ d_2^\prime$,
$r^{g_Y}=d^{-1}\circ v\circ d$
and $s^{g_Z}=d^{\prime^{-1}}\circ v\circ d^\prime$.

By definition of $\theta$, for
$x\in Y$ there exists  $u\in G(Y)$ such that $u^\theta =x^{g_Y}$. We
may then take $r=x$ and $s\in G(Z)$ such that $u^\theta = s^{g_Z}=x^{g_Y}$.
In this case we shall have $r=x=v$ and so $d_1=1$ and from loc. cit.
Corollary 2.4 $c,c^\prime \in C_G(x)$. Allowing $x$ to range over $Y$
we see that $c,c^\prime \in C_G(Y)$ and by choice of $g_Y$
it follows that $c=1$ and $g_Y=d_2$. Now, with $r=x\in Y$ again we have
$d_2^{-1}\circ x\circ d_2=r^{g_Y}=s^{g_Z}=d^{\prime -1}\circ x \circ d^\prime,$
so $d^\prime =d_2$. As $\a(d_1^\prime)\subseteq \a(s)\subseteq Z$
 we have $d_1^\prime \in G(Z)$.  If $d_1^\prime$ has a left divisor in
$G(Y)$ then
so does $g_Y$ and  $c^\prime \notin C_G(Z)$ as it's a left divisor of
 $g_Z$. This completes the proof of the claim as
$g_Yg_Z^{-1}=d_1^\prime c^{\prime -1}$.

Next we use $\theta$ to
construct a homomorphism from $G$ to itself and subsequently
show that this homomorphism is an element of $\St(L)$.
Let $x\in X$; so $G(\cl(x))^\theta=G(\cl(x))^{g_x}$. Then there
exists $u_x\in G(\cl(x))$ such that $x^\theta=u_x^{g_x}$. Define
 a map $\phi:X\rightarrow G$ by $x^\phi =u_x$, for all $x\in X$. Suppose
$x,y\in X$ with $[x,y]=1$.  Then $x,y\in x^\perp\cap y^\perp$ so
$\cl(x)\cup\cl(y)\subseteq x^\perp\cap y^\perp$.
Let $Z=x^\perp\cap y^\perp$ and so $G(Z)^\theta=G(Z)^{g_Z}$.
As $\cl(x)\subseteq Z$ we have, from the above, $g_x=abg_Z$, with
$a\in G(Z)$ and $b\in C_G(\cl(x))=G(x^\perp)$. Thus $ab\in G(x^\perp)$
and $u_x^{ab}=u_x$. Hence $u_x^{g_x}=u_x^{g_Z}$ and similarly
$u_y^{g_y}=u_y^{g_Z}$. As $\theta$ is an automorphism we have
$1=[u_x^{g_x},u_y^{g_y}]=[u_x,u_y]^{g_Z}$, so $[u_x,u_y]=1$.
Therefore $\phi$ extends to an endomorphism of $G$.

The next step is to show that $\phi$ is surjective. To this end suppose
that $y,z\in X$ and $\cl(y)\subseteq \cl(z)$. If $u\in\cl(y)$ then
$[u,v]=1$, for all $v\in \cl(z)$, as $\cl(z)\subseteq z^\perp\subseteq
y^\perp$. We have $g_y=abg_z$, where $a\in G(\cl(z))$ and
$b\in G(y^\perp)$. Hence $u_y^{ab}=u_y$ and so
$u_y^{g_y}=u_y^{g_z}$. Now let $x\in X$. As $G(\cl(x))^\theta=G(\cl(x))^{g_x}$
there exists $w\in G(\cl(x))$ such that $w^\theta=x^{g_x}$. Assume
$w=y_1^{\e_1}\cdots y_n^{\e_n}$, for some $y_i\in \cl(x)$ and $\e_i=\pm 1$.
Let $u_i=u_{y_i}$. Then $\cl(y_i)\subseteq \cl(x)$ so
$y_i^\theta=u_i^{g_x}$, from the preceding
argument, and $w^\theta=(u_1^{\e_1}\cdots u_n^{\e_n})^{g_x}=x^{g_x}$.
Hence $w^\phi=u_1^{\e_1}\cdots u_n^{\e_n}=x$ and $\phi$ is surjective.

To show that $\phi$ is injective consider the automorphism $\theta^{-1}$
and let $h_x=(g_x^{-1})^{\theta^{-1}}$, for all $x\in X$. Choose,
for all $x\in X$, an element $k_x\in G$ and $v_x\in G(\cl(x))$
such that $G(\cl(x))^{\theta^{-1}}=G(\cl(x))^{k_x}$,
$x^{\theta^{-1}}=v_x^{k_x}$ and $k_x$ has no
left divisor in $G(\cl(x))$. Then, as in the
case of $\theta$ and $\phi$ above, the map $\overline\phi:x\rightarrow v_x$
extends to an endomorphism of $G$. Moreover $h_x=j_xk_x$, for some
$j_x\in G(x^\perp)$. Suppose that $u_x=y_1^{\e_1}\cdots y_n^{\e_n}$,
where $y_i\in \cl(x)$. Write $v_i=v_{y_i}
=y_i^{\overline{\phi}},$ for $i=1,\ldots, n$.
Then, from the above, $y_i^{\theta^{-1}}=v_i^{k_x}=
 v_i^{h_x}$, as $\cl(y_i)\subseteq \cl(x)$,
$v_i\in \cl(y_i)$
 and $j_x\in G(x^\perp)$.
 Now $x=x^{\theta\theta^{-1}}=(u_x^{g_x})^{\theta^{-1}}=
(u_x^{\theta^{-1}})^{h_x^{-1}}=
(v_1^{\e_1h_x}\cdots v_n^{\e_nh_x})^{h_x^{-1}}=
v_1^{\e_1}\cdots v_n^{\e_n},$ so
$x^{\phi\overline{\phi}}=u_x^{\overline{\phi}}=v_1^{\e_1}\cdots v_n^{\e_n}
=x$. It follows that $\phi$ is a bijection and hence is an automorphism.
By definition $\phi$ maps $G(\cl(x))$ to itself, for all $x\in X$, and
so belongs to $\St(L)$.

Now define $l_x=g_x^{\phi^{-1}}$, for all $x\in X$. Then $x^\theta=u_x^{g_x}=(x^\phi)^{l_x^\phi}=(x^{l_x})^\phi$, so $\t=\theta\phi^{-1}$ is a conjugating automorphism.
Note that if $\rho\in \LInn(G)\cap \St(L)$ then $x^\rho=x^{w_x}$, for some $w_x\in G$, and $G(\cl(x))^\rho=G(\cl(x))$; so $x^{w_x}\in G(\cl(x))$.
It follows that $w_x\in G(x^\perp)$ so $x^\rho=x$ and $\rho$ is the identity map. Hence $\LInn(G)\cap \St(L)=\{1\}$. Now suppose that $\t,\t^\prime\in \LInn(G)$ and $\phi, \phi^\prime\in \St(L)$. Then $\t\phi=\t^\prime \phi^\prime$ implies $\t^{\prime -1}\t=\phi^\prime\phi \in \LInn(G)\cap \St(L)$, so $\t=\t^\prime$ and $\phi=\phi^\prime$. What we have shown is that every element $\theta \in \cSt(L)$ can be uniquely expressed as $\theta =\t\phi$ with $\t\in \LInn(G)$ and $\phi\in \St(L)$. The theorem now follows.
\end{proof}

\end{document}